\documentclass[12pt]{article}

\usepackage{amsthm,amsmath,amsfonts}
\usepackage{latexsym}
\usepackage[T1]{fontenc}
\usepackage{a4wide}
\usepackage{times}
\usepackage{array}
\usepackage{hhline}
\usepackage{graphicx}
\numberwithin{equation}{section}

\newtheorem{tw}{Theorem}
\newtheorem{proposition}{Proposition}
\newtheorem{lem}{Lemma}

\newtheorem{df}{Definition}

\title{Steady compressible Oseen flow with slip boundary conditions}
\author{Tomasz Piasecki}

\date{}

\begin{document}

\maketitle

\begin{center}
Mathematical Institute, Polish Academy of Sciences\\
ul.Sniadeckich 8, 00-956 Warszawa\\
\vskip5mm
e-mail: T.Piasecki@impan.gov.pl\\
\vskip10mm
\textbf{Abstract}
\end{center}

We prove the existence of solution in a class $H^2(\Omega) \times H^1(\Omega)$
to steady compressible Oseen system
with slip boundary conditions in a two dimensional, convex domain with the boundary
of class $H^{5/2}$.
The method is to regularize a weak solution
obtained via the Galerkin method. The problem of regularization
is reduced to a problem of solvability of a certain transport equation by application
of the Helmholtz decomposition.
The method works under additional assumption on the geometry of the boundary.\\
\\
\textbf{MSC:} 35Q10, 76N10 \\
\textbf{Keywords:} Compressible Navier-Stokes flow, Slip boundary conditions

\section{Introduction}
In this paper we consider a system of Stokes-type equations describing steady flow
of a barotropic, compressible fluid in a two dimensional, convex domain
with $H^{5/2}$ - boundary, supplied with inhomogeneous slip boundary conditions
with nonnegative friction coefficient.
The system can be considered as a linearization of a Navier-Stokes system
for compressible fluid around a constant flow $(v \equiv (1,0), \rho \equiv 1)$, thus we will
call it compressible Oseen system. The slip boundary conditions involving friction enable
to describe the interactions between the fluid and the boundary of the domain.
It also turns out that they allow to extract
some information on the vorticity of the velocity, that can be used to show that the velocity
has higher regularity. Such approach has been applied in \cite{PM1} and \cite{PM3} to incompressible
flows. In this paper we follow these ideas, modifying them in a way that they can be applied to the
compressible system. A significant feature of this system
is its elliptic-hyperbolic character: the momentum equation is elliptic in the velocity, while the
continuity equation is hyperbolic in the density.
Therefore we can prescribe the values of the density only on the part of the boundary where
the flow enters the domain and a singularity appears in the points where the inflow and outflow parts
of the boundary meets.

We show existence of a solution $(u,w) \in H^2(\Omega) \times H^1(\Omega)$. A method we apply
is to regularize a weak solution obtained via the Galerkin method.
Analysing the vorticity of the velocity we can show that the density is in fact solution
to a certain transport equation, obtained via elimination of the velocity from the continuity equation.
The problem of regularization
is thus reduced to a problem of solvability of a transport equation.
The values of the density are prescribed on the part of the boundary where the flow enters the domain, and
the density can be found as a solution to the transport equation via method of characteristics,
thus singularities appear in points where the inflow and outflow parts of the boundary coincides.
We show that the solvability of this transport equation is relied with the geometry of the boundary
near the singularity points, thus we can define classes of domains where
our method of regularization can or cannot be applied.

Since similar difficulties resulting from the mixed character of the problem appear in the analysis
of steady compressible Navier-Stokes system,
it is likely that the results of this paper will turn out useful in future analysis of the
nonlinear problem. Now let us formulate the problem more precisely.

The steady compressible Oseen system reads:
\begin{eqnarray} \label{system1}
\left\{
\begin{array}{lcr}
\partial_{x_1} u -\mu \Delta  u - (\nu + \mu) \nabla div  u +
\gamma \nabla  w =  F & \mbox{in} & \Omega,\\
div  u + \partial_{x_1}w = G
& \mbox{in}& \Omega,\\
n\cdot 2\mu {\bf D}( u)\cdot \tau +f (\ u \cdot \tau) = B
&\mbox{on} & \partial \Omega, \\
n\cdot  u = 0 & \mbox{on} & \partial \Omega,\\
w=0 & \mbox{on} & \Gamma_{in},\\
\end{array}
\right.
\end{eqnarray}
where $\Omega$ is a bounded, convex domain in $R^2$ with a boundary $\Gamma$ of class $H^{5/2}$.
$u:\Omega \to \mathbf{R^2}$ is the velocity of the fluid and $w:\Omega \to \mathbf{R}$ is the density.
$n$ denotes outward unit normal to $\Gamma$. We assume that
$F \in L_2(\Omega)$, $G \in H^1(\Omega)$ and $B \in H^{1/2}(\Gamma)$ are given functions.
$\nu$ and $\mu$ are viscosity constants satisfying $\nu + 2\mu >0$ and $f>0$ is a friction coefficient
(note that if $f \to \infty$ then the conditions (\ref{system1})$_{3,4}$ reduce to a homogeneous
Dirichlet condition).
The system (\ref{system}) can be considered as a linearization of a steady compressible Navier-Stokes
system around a constant flow $(\bar v \equiv (1,0), \bar w \equiv 1)$.
More precisely, the perturbed flow satisfies inhomogeneous boundary conditions $n \cdot u|_{\Gamma} = d$
and $w|_{\Gamma_{in}}=w_{in}$, but if we assume that $d$ and $w_{in}$ are regular enough we can reduce
the problem to homogeneous boundary conditions (\ref{system1})$_{4,5}$.
Thus we distiguish the inflow and outflow
parts of the boundary $\Gamma$ as the parts where the perturbed flow enters and leaves the domain:
$$
\Gamma_{in} = \{x: \; n_1(x)<0 \}, \quad
\Gamma_{out} = \{x: \; n_1(x)>0 \}.
$$
Let us also denote $\Gamma_{*} = \{x: \; n_1(x)=0 \}$.
We assume that $\Gamma_{*}$ consist of two points: \mbox{$x_* = (x_{1*},x_{2*})$}
and \mbox{$x^* = (x_1^*,x_2^*)$}
(see Fig. \ref{rys1}).
Due to the convexity of $\Omega$ we can define functions
$\underline{x_1}(x_2)$ and $\overline{x_1}(x_2)$ for $x_2 \in (x_{2*},x_2^*)$
in the following way:
$$
(\underline{x_1}(x_2),x_2) \in \Gamma_{in}, \quad
(\overline{x_1}(x_2),x_2) \in \Gamma_{out}
$$
Around $x_*$ and $(x^*)$
$x_2$ is given as a $H^{5/2}$-function of $x_1$. We will denote these functions by $x_2^l(x_1)$ and
$x_2^u(x_1)$ respectively (Fig. \ref{rys2})
For convenience we will denote \\
$C(DATA):=C(\mu ,\nu, \Omega, F, G, B)$
The main result of this paper is
\begin{tw} \label{main}
Assume that $F \in L^2(\Omega), G \in H^1(\Omega), B \in H^{1/2}(\Gamma)$ and $f$ is large enough.
Assume further that the boundary near the singularity points satisfies the following condition
\begin{equation} \label{war_geom}
\exists \, 1<q<3: \quad \lim_{x_1 \to x_{1*}} \frac{|x_2^l(x_1)-x_{2*}|}{| |x_1 - x_{1*}|^q - x_{2*} |}
= \lim_{x_1 \to x_1^*} \frac{|x_2^u(x_1)-x_2^*|}{| |x_1 - x_1^*|^q - x_2^* |} = +\infty.
\end{equation}
Then the system (\ref{system}) has a unique solution $(u,w) \in H^2(\Omega) \times H^1(\Omega)$
and
\begin{equation} \label{est_main}
||u||_{H^2(\Omega)} + ||w||_{H^1(\Omega)} \leq C(DATA).
\end{equation}
\end{tw}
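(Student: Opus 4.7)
The plan is to execute the two-stage programme outlined in the introduction: first obtain a weak solution $(u,w)\in H^1(\Omega)\times L^2(\Omega)$ via a Galerkin approximation, then bootstrap its regularity to the claimed $H^2\times H^1$. For the Galerkin stage I would choose a finite-dimensional subspace of $\{v\in H^1(\Omega)^2:n\cdot v|_\Gamma=0\}$, test the approximate momentum equation against $u_n$, and combine a Korn-type inequality for the symmetric gradient with the positive boundary contribution $f|u\cdot\tau|^2$ on $\Gamma$ to derive a uniform energy estimate; the approximate density is recovered simultaneously from the continuity equation, viewed as a transport equation in $w$ with the prescribed trace on $\Gamma_{in}$.

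To upgrade the regularity I would follow the vorticity--Helmholtz strategy hinted at by the author. Taking $\mathrm{curl}$ of the momentum equation eliminates $\nabla w$ and produces a scalar elliptic equation for $\alpha:=\mathrm{curl}\,u$. The slip condition (\ref{system1})$_3$, rewritten in terms of $\alpha$, the tangential derivative of $u\cdot\tau$ and the curvature of $\Gamma$, supplies a Dirichlet-type value $\alpha|_\Gamma\in H^{1/2}(\Gamma)$, which places $\alpha$ in $H^1(\Omega)$. Using the Helmholtz decomposition $u=\nabla\phi+\nabla^\perp\psi$ with $-\Delta\psi=\alpha$, the divergence in the continuity equation can be expressed through the data and $\alpha$, reducing the determination of $w$ to a first-order transport equation with essentially horizontal principal drift, $H^1$ right-hand side and homogeneous data on $\Gamma_{in}$.

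The $H^1$-solvability of this transport equation is the heart of the proof. In the interior, characteristics travel from $\Gamma_{in}$ to $\Gamma_{out}$ and the method of characteristics produces $w$ together with its derivatives. The obstruction is localized at the two singular points $x_*$ and $x^*$ where $\Gamma_{in}$ and $\Gamma_{out}$ meet and the characteristic flow degenerates. The geometric assumption (\ref{war_geom}) quantifies, via the exponent $q$, how the boundary compares to a power-type profile $|x_1-x_{1*}|^q$ near these points; the specific range $1<q<3$ is precisely what is needed to keep the weighted integrals representing $\nabla w$ along characteristics square-integrable. The careful verification of this estimate is the step I expect to be most delicate, and it is what dictates the admissible class of domains.

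With $w\in H^1(\Omega)$ in hand, the momentum equation becomes a Lam\'e-type system for $u$ with $L^2$ right-hand side, $H^{1/2}$ tangential boundary data from the slip condition and $u\cdot n=0$ on the $H^{5/2}$ boundary, so standard elliptic regularity for slip problems on convex domains yields $u\in H^2(\Omega)$ and the estimate (\ref{est_main}). Uniqueness follows from linearity: applying the whole scheme to the homogeneous problem and using the largeness of $f$ to absorb the coupling between $w$ and $u\cdot\tau$ closes the energy estimate and forces $(u,w)=(0,0)$.
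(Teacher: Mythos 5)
Your outline follows the paper's route essentially step for step: Galerkin approximation with the density reconstructed from the continuity equation along horizontal lines, an $H^1$ bound on the vorticity obtained from the slip condition (which gives $\mathrm{rot}\,u|_\Gamma=(2\chi-f/\mu)(u\cdot\tau)$), the Helmholtz decomposition, reduction of the density to a transport equation $\gamma w+w_{x_1}=H\in H^1(\Omega)$, and a geometric condition at the singular points. Two remarks, one substantive.

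First, a small imprecision: it is not the case that ``the divergence in the continuity equation can be expressed through the data and $\alpha$.'' The divergence is coupled to $w$; what the gradient part of the momentum equation actually yields is that the effective viscous flux $-(2\mu+\nu)\,\mathrm{div}\,u+\gamma w$ lies in $H^1(\Omega)$ (Lemma \ref{lem_divu_w}), and only after substituting this into the continuity equation does one obtain the closed transport equation for $w$. Second, and more seriously, the step you yourself flag as ``most delicate'' is precisely the content of the proof and is left undone. The mechanism is concrete: differentiating the transport equation in $x_2$ and integrating along characteristics expresses $w_{x_2}$ in terms of its trace on $\Gamma_{in}$, and since the tangential derivative of $w$ vanishes there while $w_{x_1}|_{\Gamma_{in}}=H|_{\Gamma_{in}}$, one gets $w_{x_2}|_{\Gamma_{in}}=-\underline{x_1}'(x_2)H$. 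The obstruction is the blow-up of $\underline{x_1}'$ at $x_*$, $x^*$, balanced against the vanishing width $\overline{x_1}-\underline{x_1}$; square-integrability of $w_{x_2}$ reduces to
\begin{equation*}
\int_{x_{2*}}^{x_2^*}\bigl[\overline{x_1}(x_2)-\underline{x_1}(x_2)\bigr]\,
\bigl|\underline{x_1}'(x_2)\bigr|^{2}\,H^2\bigl(\underline{x_1}(x_2),x_2\bigr)\,dx_2<\infty,
\end{equation*}
which, using only $H\in H^{1/2}(\Gamma_{in})\subset L^p(\Gamma_{in})$ for all finite $p$, one must convert into the purely geometric condition (\ref{war1}) and then into (\ref{war_geom}); the model computation with $l(x_1)=|x_1|^q$ gives the weight $x_2^{(3+2\epsilon)/q-(2+\epsilon)}$, integrable exactly when $q<(3+2\epsilon)/(1+\epsilon)<3$. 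This is where the exponent $3$ comes from, and without it your argument does not explain why the stated hypothesis suffices. Finally, the paper carries out the vorticity step entirely in the weak formulation (test functions $\nabla^\perp\phi$ and the splitting $\alpha=b+d$), since no strong equation is yet available; your formal ``take the curl'' needs that justification.
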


The geometric condition (\ref{war_geom}) may look strange since it is
formulated in a general form, but it has a clear meaning. Namely, the boundary near the singularity
points can not be too flat, more precisely, our method works if the boundary is less flat than
a graph of a function $|x_1|^q$ around zero for some $q<3$. We also show (lemma \ref{lem_g} (b)) that the
method does not work if the boundary behaves like $|x_1|^3$ or is more flat.
The limit case if the boundary is more flat that the graph of $|x_1|^q$ for all $q<3$, but less
flat than $|x_1|^3$. An example of such a function is $|x_1|^3 \, |\ln |x_1||$. In lemma \ref{lem_g1}
we show that our method doesn't work in such case.
The proof of theorem \ref{main} is divided into several steps.
In section \ref{sec_weak} we show existence of a weak solution in a class \mbox{$(H^1(\Omega))^2 \times L^2(\Omega)$}
using the Galerkin method (theorem \ref{th_weak}). To obtain a weak solution it is enough to assume that
$G \in L^2(\Omega)$,
and no further constraint on the geometry of $\Gamma$ is required.
The constraint (\ref{war_geom}) arises when we want to show that the weak solution belongs to class
$H^2(\Omega) \times H^1(\Omega)$, and we also need $G \in H^1(\Omega)$. The issue of regularity
of the weak solution is treated in section \ref{sec_reg}.
First we prove that the vorticity of the velocity belongs to $H^1(\Omega)$ (lemma \ref{lem_rot}).
Such approach has been applied to incompressible Navier-Stokes equations in \cite{PM1} and \cite{PM3}.
In the incompressible case we can next solve a div-rot system to show higher regularity of the velocity,
but in the compressible case we have to extract some information on the density. The idea is to
use the Helmholtz decomposition in $H^1(\Omega)$, that means express
the velocity $u$
as a sum of a divergence-free vector function and a gradient. The standard theory of elliptic
equations enables us to show that the divergence-free part belongs to $H^2(\Omega)$, and in order to show
higher regularity of the gradient part it is enough to show that $div \,u \in H^1(\Omega)$.
In lemma $\ref{lem_divu_w}$ we show that $div \,u + w \in H^1(\Omega)$, thus we have to show that
$w \in H^1(\Omega)$. The method is to show that the density is a solution to the transport equation
\begin{equation} \label{transport}
\bar \gamma \,w + w_{x_1} = H \in H^1(\Omega).
\end{equation}
Thus the problem of regularization of the weak solution is reduced a problem of solvability of the
transport equation (\ref{transport}).
The boundary condition (\ref{system})$_5$ prescribes the values of the density on the
inflow part of the boundary and (\ref{transport}) can be solved via method of characteristics,
thus a singularity appears in the points $x_*$ and $x^*$, which we will call
the singularity points.
It turns out that we can solve the equation (\ref{transport}) provided that the singularity
is not too strong, what is reflected in the constraint (\ref{war_geom}).
We will finish the introductory part removing inhomogeneity on the boundary.
Let us construct a function
\mbox{$u_0 \in H^2(\Omega)$} satisfying
\begin{equation} \label{boundary}
n\cdot 2\mu {\bf D}( u)\cdot \tau +f (\ u \cdot \tau)|_{\Gamma} = B \quad \textrm{and} \quad
n\cdot  u|_{\Gamma} = 0,
\end{equation}
such that $||u_0||_{H^2(\Omega)} \leq C(\Omega) ||B||_{H^{1/2}(\Gamma)}$.
Then a pair $(\tilde u, w)$, where $\tilde u = u - u_0$,
satisfies
\begin{eqnarray} \label{system}
\left\{
\begin{array}{lcr}
\partial_{x_1} \tilde u -\mu \Delta \tilde u - (\nu + \mu) \nabla div \, \tilde u +
\gamma \nabla  w = \tilde F & \mbox{in} & \Omega,\\
div \, \tilde u + \partial_{x_1}w = \tilde G
& \mbox{in}& \Omega,\\
n\cdot 2\mu {\bf D}(\tilde u)\cdot \tau +f (\ \tilde u \cdot \tau) = 0
&\mbox{on} & \Gamma, \\
n\cdot \tilde u = 0 & \mbox{on} & \Gamma,\\
w=0 & \mbox{on} & \Gamma_{in},\\
\end{array}
\right.
\end{eqnarray}
where
\begin{eqnarray}         \label{tilde_fg}
\left\{
\begin{array}{c}
\tilde F = F + \mu \Delta u_0 + (\nu+\mu) \nabla div \,u_0 - \partial_{x_1}u_0 \in L^2(\Omega) \\
\tilde G = G - div \, u_0 \in H^1(\Omega).
\end{array} \right.
\end{eqnarray}
Obviously we have
$$
||\tilde F||_{L^2(\Omega)} \leq C (||F||_{L^2(\Omega)} + ||B||_{L^2(\Gamma)}) \quad \textrm{and} \quad
||\tilde G||_{H^1(\Omega)} \leq C (||G||_{H^1(\Omega)} + ||B||_{L^2(\Gamma)}),
$$
thus from now on we can work with the system (\ref{system}) denoting $u := \tilde u$, $F := \tilde F$, and $G =\tilde G$.
\begin{figure}[htb]
\begin{center}
\includegraphics{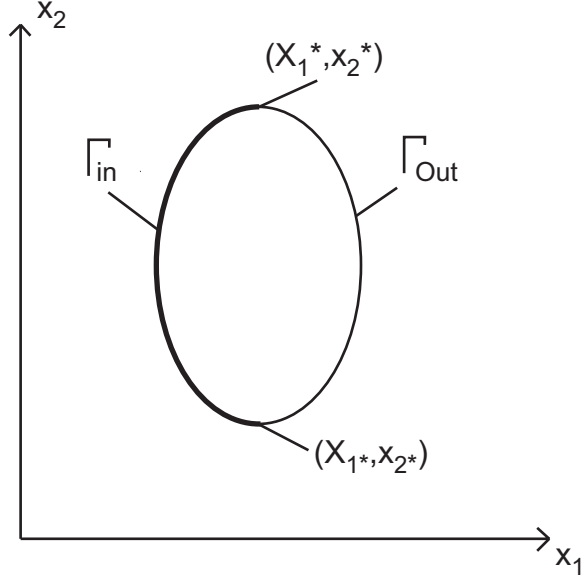}
\caption{The domain}      \label{rys1}
\end{center}
\end{figure}
\begin{figure}
\begin{center}
\includegraphics{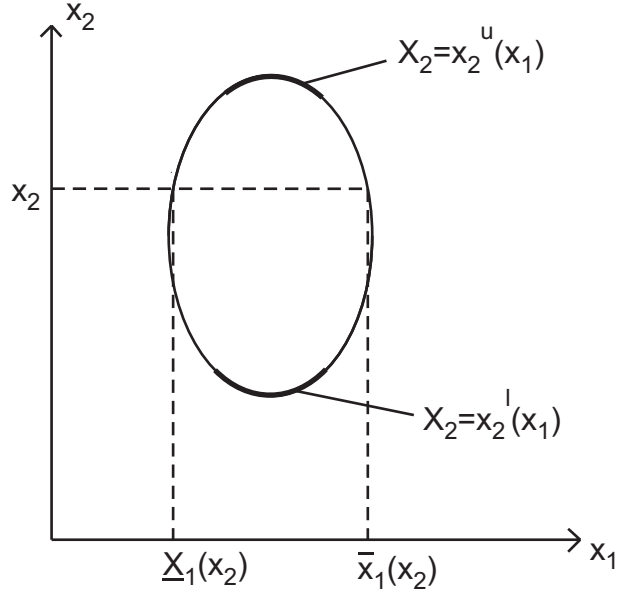}
\caption{The domain, functions $\underline{x_1},\overline{x_1},x_2^u,x_2^l$}	\label{rys2}
\end{center}
\end{figure}
\section{Weak solution}   \label{sec_weak}
In order to define a weak solution to the system (\ref{system}) consider a space
$$
V_0 = \{ v \in C^\infty(\Omega): \; v \cdot n|_{\Gamma} = 0, \quad n \cdot 2 \mu \mathbf{D}(v) \cdot \tau + f(v \cdot \tau)|_{\Gamma} =0 \}
$$
and $V = \overline{V_0} ^{ ||\cdot||_{H^1(\Omega)} }$
equipped with the norm $||v||_{V} = ||v||_{H^1(\Omega)}$. Consider also a space
$$
W = \{ \eta \in L^2(\Omega): \; \eta_{x_1} \in L^2(\Omega) \; \textrm{and} \; \eta|_{\Gamma_{in}}=0 \}
$$
with the norm $||w||_{W} = ||w||_{L_2(\Omega)} + ||w_{x_1}||_{L_2(\Omega)}$.

Now we want to introduce a weak formulation of (\ref{system}). First, observe that
for $u,v$ regular enough we have
\begin{eqnarray} \label{weak0}
\int_{\Omega} (-\mu \Delta u - (\nu +\mu)\nabla \, div u) \cdot v \, dx =
\int_{\Omega} 2 \mu {\bf D}(u): \nabla \,v + \nu \, div \,u \, div \,v \, dx - \nonumber\\
\int_{\Gamma} n \cdot [2 \mu {\bf D}(u)] \cdot v \, d\sigma - \int_{\Gamma} n \cdot [\nu (div u) {\bf{Id}} ] \cdot v \, d\sigma,
\end{eqnarray}
where $A:B = \sum_{i,j=1}^n a_{i,j} \, b_{i,j}$ for $A=\{a_{i,j}\}, B=\{a_{i,j}\} \in R^{n \times n}$. \\
Thus taking $u \in V_0$ in (\ref{system})$_1$ and multiplying it by a function $v \in V_0$ we get
\begin{eqnarray} \label{weak1}
\int_{\Omega} \{ v \cdot \partial_{x_1} u + 2 \mu {\bf D}(u) : \nabla \,v + \nu \, div \,u \, div \,v
- \gamma w \, div \,v \} \,dx
+ \int_{\Gamma} f (u \cdot \tau) \, (v \cdot \tau) \,d\sigma = \nonumber\\
= \int_{\Omega} F \cdot v \,dx .
\end{eqnarray}
Multiplying (\ref{system})$_2$ by a regular function $\eta \in W$ we get
\begin{equation} \label{weak2}
\int_Q \eta [div \,u + w_{x_1}] \,dx = \int_Q G \, \eta \,dx .
\end{equation}
The above considerations leads to a natural definition of a weak solution to the system (\ref{system}).
\begin{df}
By a weak solution to the system (\ref{system}) we mean
a couple $(u,w) \in V \times W$ satisfying (\ref{weak1}) - (\ref{weak2})
for each $(v,\eta) \in V \times W$.
\end{df}
We want to show existence of a weak solution using the Galerkin method.
In order to show existence of solutions to approximate problems in section \ref{sec_weak_aprox}
we apply well known result  (lemma \ref{lem_P}).
This result automatically gives uniform boundedness of the sequence of approximate
solutions, what enables us to show convergence of approximate solutions to the weak solution
in section \ref{sec_weak_exist}.
\subsection{Approximate solutions}  \label{sec_weak_aprox}
In order to construct a Galerkin approximation of a weak solution
let us introduce an orthonormal basis of $V$:
$
\{ \phi_i \}_{i=1}^{\infty} = \{ (\phi_i^1 ,\phi_i^2) \}_{i=1}^{\infty}
$
and finite dimensional spaces:
\mbox{$V^N = \{ \sum_{i=1}^N \alpha_i \phi_i: \; \alpha_i \in \mathbf{R} \} \subset V$}.
We will search for a sequence of approximations to the velocity in the form
\begin{equation} \label{un}
u^N = \sum_{i=1}^N c_i^N \, \phi_i.
\end{equation}
Let us denote $\underline{x_1}:=\underline{x_1}(x_2)$. Taking $u=u^N$, $v=\phi_k$  and $w=w^N$ where
$$
w^N(x_1,x_2) = \int_{\underline{x_1}}^{x_1} (G - div \, u^N)(s,x_2)\,ds
$$
in (\ref{weak1}) we get
\begin{eqnarray}  \label{finite1}
\sum_i \, c_i^N \int_{\Omega} \partial_{x_1} \phi_i \cdot \phi_k \,dx +
2\mu \, \sum_i \, c_i^N \int_{\Omega} {\bf D}(\phi_i) : \nabla \phi_k \nonumber\\
+ \nu \sum_i \,c_i^N \int_{\Omega} \, div \, \phi_i \cdot div \, \phi_k \,dx
- \gamma \, \sum_i \int_{\Omega} \{ \int_{\underline{x_1}}^{x_1} (G - \sum_i \,c_i^N \, div\,\phi_i)(s,x_2)\,ds \} \, div \, \phi_k \,dx \nonumber \\
+f \, \sum_i \,c_i^N \int_{\Gamma} (\phi_i \cdot \tau) \, (\phi_k \cdot \tau) \, d\sigma  =
\int_{\Omega} F \cdot \phi_k \,dx .
\end{eqnarray}
For $k=1 \ldots N$ we obtain a system of $N$ equations on coefficients $\{c_i^N\}_{i=1}^N$.
If a function $u^N$ of a form (\ref{un}) satisfies the equations
(\ref{finite1}) for $k=1 \ldots N$, it means that a pair
$(u^N,w^N)$
satisfies (\ref{weak1})-(\ref{weak2}) for each $(v,\eta) \in V^N \times W$.
We will call such a pair $(u^N,w^N)$ an approximate solution to (\ref{weak1}) - (\ref{weak2}).

The system (\ref{finite1}), $k=1 \ldots N$ is rather complicated thus in order to solve it
we will use the following well known result (see for example \cite{Te}):

\begin{lem}  \label{lem_P}
Let $X$ be a finitely dimensional Hilbert space and let $P:X \to X$ be a continuous operator
satisfying
\begin{equation}  \label{lem_P_1}
\exists M>0: \quad (P(\xi),\xi) > 0 \quad \textrm{for} \quad ||\xi|| = M
\end{equation}
Then
$
\exists \xi^*: \quad ||\xi^*|| \leq M \quad \textrm{and} \quad P(\xi^*) = 0
$
\end{lem}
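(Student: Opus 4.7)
The plan is to argue by contradiction using the Brouwer fixed point theorem, exploiting that $X$ is finite dimensional and hence its closed balls are homeomorphic to Euclidean closed balls, which have the fixed point property.

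Suppose, for contradiction, that $P(\xi) \neq 0$ for every $\xi$ in the closed ball $\bar B_M = \{\xi \in X : \|\xi\| \leq M\}$. Then the map
\begin{equation*}
T : \bar B_M \to \bar B_M, \qquad T(\xi) = -\frac{M\,P(\xi)}{\|P(\xi)\|}
\end{equation*}
is well-defined and continuous, since $P$ is continuous and does not vanish. Moreover $\|T(\xi)\| = M$ for every $\xi$, so $T$ actually maps $\bar B_M$ into its boundary sphere. Since $\bar B_M$ is a convex, compact subset of the finite-dimensional space $X$, Brouwer's fixed point theorem yields a $\xi^* \in \bar B_M$ with $T(\xi^*) = \xi^*$.

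From $T(\xi^*) = \xi^*$ one reads off $\|\xi^*\| = M$ and
\begin{equation*}
\xi^* = -\frac{M}{\|P(\xi^*)\|}\, P(\xi^*),
\end{equation*}
so that
\begin{equation*}
(P(\xi^*),\xi^*) = -\frac{M}{\|P(\xi^*)\|}\, \|P(\xi^*)\|^2 = -M\,\|P(\xi^*)\| < 0.
\end{equation*}
This contradicts the hypothesis $(P(\xi),\xi) > 0$ on the sphere $\|\xi\| = M$, so $P$ must have a zero inside $\bar B_M$.

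The only real step requiring care is justifying the use of Brouwer's theorem in the abstract Hilbert space setting, but this is immediate: since $\dim X < \infty$, fix any linear isometry $X \cong \mathbb R^n$, under which $\bar B_M$ corresponds to the standard Euclidean ball; continuity and the fixed point property transfer through this isometry. No delicate estimates are involved; the force of the lemma lies entirely in converting a sign condition on the boundary into an interior zero, and the normalization $\xi \mapsto -MP(\xi)/\|P(\xi)\|$ is the standard device that makes this conversion.
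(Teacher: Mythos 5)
Your proof is correct. The paper does not prove this lemma at all — it states it as a known result and refers to Temam \cite{Te} — and your argument (contradiction via the normalized map $\xi \mapsto -M P(\xi)/\|P(\xi)\|$ and Brouwer's fixed point theorem, after identifying $X$ with Euclidean space through an orthonormal basis) is exactly the standard proof given in that reference.
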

In order to apply lemma \ref{lem_P} we will need some auxiliary results in spaces $V$ and $W$.
\begin{lem}(Poincare inequality in $V$)
\begin{equation}  \label{Poin_v}
\forall \, v \in V: \quad ||u||_{L^2(\Omega)} \leq C(\Omega) ||\nabla u||_{L^2(\Omega)}.
\end{equation}
\end{lem}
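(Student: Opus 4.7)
My plan is to argue by contradiction in the standard compactness fashion. Suppose the inequality fails. Then there exists a sequence $\{v_n\} \subset V$ with $\|v_n\|_{L^2(\Omega)} = 1$ and $\|\nabla v_n\|_{L^2(\Omega)} \to 0$. In particular $\{v_n\}$ is bounded in $H^1(\Omega)$, so by reflexivity we can extract a subsequence (still denoted $v_n$) converging weakly in $H^1(\Omega)$ to some $v$, and by the Rellich--Kondrachov theorem strongly in $L^2(\Omega)$. The strong convergence gives $\|v\|_{L^2(\Omega)} = 1$, while weak convergence combined with $\|\nabla v_n\|_{L^2(\Omega)} \to 0$ forces $\nabla v = 0$ (in the distributional sense), so $v$ is a constant vector on $\Omega$.

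The next step is to use the information carried by membership in $V$. Since $v_n \in V$ we have $v_n \cdot n|_{\Gamma} = 0$, and by continuity of the trace operator $H^1(\Omega) \to L^2(\Gamma)$ together with the weak $H^1$ convergence, $v \cdot n|_{\Gamma} = 0$ as well. Writing $v = (a,b)$ with $a,b \in \mathbb{R}$, this means $a\, n_1(x) + b \, n_2(x) = 0$ for almost every $x \in \Gamma$.

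Finally, I invoke the geometric setup of the excerpt: $\Gamma$ contains points of $\Gamma_{in}$ (where $n_1 < 0$) and points of $\Gamma_{*}$ (where $n_1 = 0$, so $n = (0,\pm 1)$). At a point of $\Gamma_{*}$ the relation $a\, n_1 + b \, n_2 = 0$ yields $b = 0$; at a point of $\Gamma_{in}$ it then yields $a\, n_1 = 0$ and hence $a = 0$ since $n_1 \neq 0$ there. Thus $v \equiv 0$, contradicting $\|v\|_{L^2(\Omega)} = 1$. This completes the proof.

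The argument is essentially routine; the only point worth verifying carefully is that two linearly independent normal directions actually occur on $\Gamma$, which is guaranteed by the assumption that $\Gamma_{in}, \Gamma_{out}, \Gamma_{*}$ are all nonempty (equivalently, by the convexity of $\Omega$ combined with the fact that it is bounded, so $\int_\Gamma n \, d\sigma = 0$ and $n$ cannot be confined to a single line).
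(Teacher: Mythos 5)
Your argument is correct and is essentially the paper's own proof: a contradiction--compactness argument showing the limit is a constant vector whose normal trace vanishes on $\Gamma$, which forces it to be zero because the normal attains two linearly independent directions (the paper phrases this as the normal sweeping out the whole unit circle on the boundary of a bounded convex domain). The only point worth a word of care is that your pointwise evaluation at the two points of $\Gamma_{*}$ (a null set on $\Gamma$) is legitimate only because $n$ is continuous on the $H^{5/2}$ boundary, so the a.e.\ identity $a\,n_1+b\,n_2=0$ upgrades to an everywhere identity; alternatively one can evaluate only on the positive-measure sets $\Gamma_{in}$ and $\Gamma_{out}$.
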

\begin{proof} \hbox{Assume that (\ref{Poin}) doesn't hold.
Then $\exists \{v_k\}_{k=1}^{\infty} \in V$ such that}
{$||\nabla \,v_k||_{L^2(\Omega)} < \frac{1}{k} \, ||v_k||_{L^2(\Omega)}$.}
Without loss of generality we can assume
$||v_k||_{L^2(\Omega)} = 1 \, \forall \, k$, thus
\begin{equation} \label{Poin_v_1}
||\nabla \,v_k||_{L^2(\Omega)} \to 0.
\end{equation}
Clearly $\{v_k\}$ is a bounded sequence in $H^1(\Omega)$ and thus thanks to boundedness of $\Omega$
the compact embedding theorem implies
that it contains a subsequence $\{v_{k_j}\}$ that is a Cauchy sequence in $L^2(\Omega)$. But (\ref{Poin_v_1})
implies that $\nabla v_{k_j}$ is also a Cauchy sequence in $L^2(\Omega)$.
Thus $\{v_{k_j}\}$ is a Cauchy sequence in $H^1(\Omega)$, hence
$
v_{k_j} \overset{H^1}{\to} v^*
$
for some $v^* \in H^1(\Omega)$. Obviously $||v^*||_{L^2(\Omega)}=1$ and $||\nabla v^*||=0$,
thus $v^*$ is constant almost everywhere. But also $(v^* \cdot n)|_{\Gamma} = 0$, and
since $\Omega$ is a bounded set with regular boundary, the unit normal takes all the values from the
unit sphere on $\Gamma$. Therefore
\begin{eqnarray*}
\left. \begin{array}{c}
v^* \overset{a.e.}{\equiv} const \\
(v^* \cdot n)|_{\Gamma} = 0
\end{array} \right\}
\Rightarrow v^* \overset{a.e.}{\equiv} 0,
\end{eqnarray*}
what contradicts $||v^*||_{L^2(\Omega)} = 1$
\end{proof}

Now we will use the Poincare inequality to show that in $V$ a
following modification of the Korn inequality holds:
\begin{lem}
Assume that $f$ is large enough. Then for $u \in V$:
\begin{equation} \label{Korn}
\int_{Q} 2 \mu {\bf D^2}(u) + \int_{\Gamma} f \,(u \cdot \tau)^2 \,d\sigma \geq C \| u \|_{H^1}^2.
\end{equation}
\end{lem}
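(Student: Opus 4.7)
The natural approach, mirroring the Poincaré proof just given, is a contradiction/compactness argument combined with Korn's second inequality.

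Suppose the claim fails. Then there is a sequence $\{u_k\} \subset V$ with $\|u_k\|_{H^1(\Omega)}=1$ for all $k$ and
$$
\int_{\Omega} 2\mu\,|\mathbf{D}(u_k)|^2 \, dx + f \int_{\Gamma}(u_k\cdot\tau)^2\, d\sigma \longrightarrow 0.
$$
Since $\{u_k\}$ is bounded in $H^1(\Omega)$, the Rellich--Kondrachov theorem furnishes a subsequence (not relabeled) with $u_k \rightharpoonup u^*$ weakly in $H^1(\Omega)$, strongly in $L^2(\Omega)$, and by compactness of the trace operator $H^1(\Omega)\hookrightarrow L^2(\Gamma)$ we also have $u_k|_{\Gamma}\to u^*|_{\Gamma}$ strongly in $L^2(\Gamma)$. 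In particular $u^*\in V$ since $V$ is weakly closed in $H^1$.

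Weak lower semicontinuity of the two nonnegative quadratic forms then forces $\mathbf{D}(u^*)\equiv 0$ in $\Omega$ and $(u^*\cdot\tau)|_{\Gamma}=0$; combined with $(u^*\cdot n)|_{\Gamma}=0$ (from $u^*\in V$), this gives $u^*|_{\Gamma}=0$. The key algebraic fact is that the kernel of $\mathbf{D}$ in two dimensions consists of infinitesimal rigid motions, $u^*(x)=a+b(-x_2,x_1)$ for some $a\in\mathbb{R}^2$, $b\in\mathbb{R}$. Imposing $u^*=0$ at three (non-collinear) points of the curve $\Gamma$ forces $a=0$ and $b=0$, so $u^*\equiv 0$.

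Having established $u_k\to 0$ strongly in $L^2(\Omega)$, we invoke the classical second Korn inequality, valid on Lipschitz (and a fortiori $H^{5/2}$) domains: there exists $C_K$ so that for every $v\in H^1(\Omega)$
$$
\|v\|_{H^1(\Omega)}^2 \leq C_K\bigl(\|\mathbf{D}(v)\|_{L^2(\Omega)}^2 + \|v\|_{L^2(\Omega)}^2\bigr).
$$
Applied to $u_k$ both terms on the right go to zero, yielding $\|u_k\|_{H^1(\Omega)}\to 0$, a contradiction to the normalization $\|u_k\|_{H^1(\Omega)}=1$.

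The step I would regard as the main conceptual hinge is the identification of the weak limit: passing the boundary form to the limit requires the compact trace embedding (not just weak trace convergence), and the cancellation of the rigid-motion kernel relies crucially on having both the normal condition $u^*\cdot n=0$ built into $V$ and the tangential condition $u^*\cdot\tau=0$ obtained from the boundary penalty. The role of ``$f$ large enough'' is merely to ensure that the prefactor in front of the tangential trace dominates in subsequent estimates; the present qualitative inequality itself follows already for any $f>0$, only the constant $C$ depending on $f$.
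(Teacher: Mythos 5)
Your proof is correct, but it is a genuinely different argument from the one in the paper. The paper proceeds by direct computation: writing $2\int_\Omega \mathbf{D}^2(u)=\|\nabla u\|_{L^2}^2+\int_\Omega\sum u^i_{x_j}u^j_{x_i}$, integrating the mixed term by parts twice, using $u\cdot n|_\Gamma=0$ to kill one boundary term and to reduce the other to a curvature term bounded by $C(\Omega)\|u\|_{L^2(\Gamma)}^2$, and then absorbing that term into $f\int_\Gamma(u\cdot\tau)^2\,d\sigma$; this is where ``$f$ large enough'' enters, and it makes the threshold on $f$ explicit (it must dominate a constant determined by $\mu$ and the curvature of $\Gamma$). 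Your route instead normalizes a putative counterexample sequence, identifies the weak limit as a rigid motion annihilated by the boundary conditions, and closes with the classical second Korn inequality. What you gain is a stronger and cleaner statement: the coercivity holds for every $f>0$ (with a constant degenerating as $f\to 0$), and you avoid the curvature computation entirely. What you give up is any quantitative control of the constant and of the required size of $f$, and you import Korn's second inequality as a black box, whereas the paper's two integrations by parts are self-contained and only lean on the Poincar\'e inequality proved just before. All the individual steps you use are sound: the trace operator $H^1(\Omega)\to L^2(\Gamma)$ is indeed compact, $V$ is a closed (hence weakly closed) subspace on which $u\cdot n|_\Gamma=0$ survives the closure, the kernel of $\mathbf{D}$ on a connected planar domain consists of the rigid motions $a+b(-x_2,x_1)$, and vanishing of such a field at two distinct boundary points already forces $a=0$, $b=0$. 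So the argument stands as written; it simply proves a slightly sharper lemma by a compactness method rather than the paper's explicit absorption argument.
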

\begin{proof} The proof is based on a proof of a different version of the Korn inequality in \cite{PM1}.
We have
\begin{eqnarray} \label{Korn_1}
2 \int_{\Omega} \mathbf{D}^2(u) = \sum_{i,j=1}^2 \big[ (u^i_{x_j})^2 + u^i_{x_j} \, u^j_{x_i} \big] =
||\nabla u||^2_{L^2(\Omega)} + \int_{\Omega} \sum_{i,j=1}^k u^i_{x_j} \, u^j_{x_i} \,dx =\nonumber\\
= ||\nabla u||^2_{L^2(\Omega)} + \int_{\Omega} \sum_{i,j=1}^k u^i_{x_i} \, u^j_{x_j} \,dx
- \int_{\Gamma} \sum_{i,j=1}^k u^i \, u^j_{x_j} \, n^i \,d\sigma
- \int_{\Gamma} \sum_{i,j=1}^k u^i \, u^j \, n^j_{x_i} \,d\sigma .
\end{eqnarray}
The second term of the r.h.s is equal to $\int_{\Omega} div^2 \,u \,dx \geq 0$
and the third term vanishes since \mbox{$(u \cdot n)|_{\Gamma}=0$}, thus from (\ref{Korn_1}) we get
\begin{equation}
2 \mu \int_{\Omega} \mathbf{D}^2(u) \geq \mu ||\nabla u||^2_{L^2(\Omega)}
- \mu \int_{\Gamma} \sum_{i,j=1}^k u^i \, u^j \, n^j_{x_i} \,d\sigma,
\end{equation}
but we have $\big| \int_{\Gamma} \sum_{i,j=1}^k u^i \, u^j \, n^j_{x_i} \,d\sigma \big| \leq C(\Omega) \, ||u||_{L^2(\Gamma)}$
and thus using the Poincare inequality (\ref{Poin_v}) we get
$$
\int_{\Omega} 2 \, \mathbf{D}^2(u) + f(u \cdot \tau)^2 \geq C(\Omega,\mu) ||u||_{H^1(\Omega)} + [f-C(\Omega,\mu)] \, ||u||_{L^2(\Gamma)}
$$
and the last term will be positive provided that $f$ is large enough.
\end{proof}
The last inequality we need is the Poincare inequality in $W$.
\begin{lem}(Poincare inequality in $W$)
\begin{equation}
\forall \, \eta \in W: \quad ||\eta||_{L_2(\Omega)} \leq \textrm{diam}(\Omega) ||\eta_{x_1}||_{L_2(\Omega)}. \label{Poin}
\end{equation}
\end{lem}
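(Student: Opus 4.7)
The plan is to exploit the fact that $\Omega$ is bounded in the $x_1$-direction and that $\eta$ vanishes on $\Gamma_{in}$, so one can recover $\eta$ by integrating its derivative $\eta_{x_1}$ along horizontal segments starting from the inflow boundary. This is the standard one-dimensional Poincar\'e argument applied fibrewise.

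\medskip

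Concretely, I would first verify the estimate for $\eta \in C^1(\overline{\Omega})$ with $\eta|_{\Gamma_{in}}=0$; the general case then follows by the density of such functions in $W$ together with the continuity of the trace on $\Gamma_{in}$ (which is well defined since $\eta_{x_1} \in L^2(\Omega)$ and $\Gamma_{in}$ is transverse to the $x_1$-direction). For such a smooth $\eta$, fix $x_2 \in (x_{2*}, x_2^*)$. The horizontal segment through $x_2$ enters $\Omega$ at $(\underline{x_1}(x_2),x_2) \in \Gamma_{in}$, where $\eta$ vanishes, so by the fundamental theorem of calculus
\begin{equation*}
\eta(x_1,x_2) = \int_{\underline{x_1}(x_2)}^{x_1} \eta_{x_1}(s,x_2)\,ds
\qquad \text{for } \underline{x_1}(x_2) \le x_1 \le \overline{x_1}(x_2).
\end{equation*}

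\medskip

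Applying the Cauchy--Schwarz inequality to this identity yields the pointwise bound
\begin{equation*}
|\eta(x_1,x_2)|^2 \le (x_1 - \underline{x_1}(x_2)) \int_{\underline{x_1}(x_2)}^{\overline{x_1}(x_2)} |\eta_{x_1}(s,x_2)|^2\,ds
\le \textrm{diam}(\Omega) \int_{\underline{x_1}(x_2)}^{\overline{x_1}(x_2)} |\eta_{x_1}(s,x_2)|^2\,ds,
\end{equation*}
since $\overline{x_1}(x_2) - \underline{x_1}(x_2) \le \textrm{diam}(\Omega)$. Integrating this inequality in $x_1$ over $(\underline{x_1}(x_2),\overline{x_1}(x_2))$ and then in $x_2$ over $(x_{2*}, x_2^*)$, and using Fubini to restore the full integral over $\Omega$, gives $\|\eta\|_{L^2(\Omega)}^2 \le \textrm{diam}(\Omega)^2 \|\eta_{x_1}\|_{L^2(\Omega)}^2$, which is the claim (in fact one gets the slightly sharper constant $\textrm{diam}(\Omega)^2/2$, but this is not needed).

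\medskip

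I do not anticipate a serious obstacle: the only mild subtlety is justifying the boundary value $\eta|_{\Gamma_{in}}=0$ for rough $\eta \in W$, which is handled either by approximation or by directly invoking the theory of traces for functions with integrable directional derivative across a transverse boundary. All remaining steps are straightforward one-dimensional calculus and Fubini.
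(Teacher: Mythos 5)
Your proof is correct and is essentially the argument the paper has in mind: the paper's proof is a one-line remark invoking density of smooth functions in $W$ and the Jensen inequality, which is exactly your fibrewise fundamental-theorem-of-calculus representation from $\Gamma_{in}$ followed by Cauchy--Schwarz (a special case of Jensen) and Fubini. You have simply written out the details, including the trace issue that the paper leaves implicit.
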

\begin{proof}
The proof is straightforward using density of smooth functions in $W$
and the Jensen inequality.
\end{proof}
The following theorem gives a solution to the system (\ref{finite1})
\begin{tw}
For $F,G \in L^2(\Omega)$ and $B \in L^2(\Gamma)$ there exists a solution $\{c_i^N\}_{i=1}^N$ to the system
(\ref{finite1}), $k=1 \ldots N$. The function $u^N = \sum_i c_i^N \phi_i$ satisfies
\begin{equation}   \label{ene_aprox}
||u^N||_{H^1(\Omega)} \leq C(DATA).
\end{equation}
\end{tw}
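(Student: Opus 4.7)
The plan is to apply Lemma \ref{lem_P} in the finite-dimensional Hilbert space $V^N \cong \mathbb{R}^N$ (equipped with the Euclidean norm, which coincides with $\|u^N\|_V = \|u^N\|_{H^1}$ since $\{\phi_i\}$ is orthonormal in $V$). Define $P:\mathbb{R}^N \to \mathbb{R}^N$ by letting $P_k(c_1,\dots,c_N)$ be the difference between the left-hand side and the right-hand side of the $k$-th equation in (\ref{finite1}). Since $w^N$ depends linearly on $c$ through the explicit formula $w^N(x_1,x_2) = \int_{\underline{x_1}}^{x_1}(G - \mathrm{div}\, u^N)(s,x_2)\,ds$, the map $P$ is affine, hence continuous. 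So the only real work is verifying the sign condition (\ref{lem_P_1}), which will automatically produce the bound (\ref{ene_aprox}).

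To check (\ref{lem_P_1}), I would compute $(P(c),c)$, which amounts to testing (\ref{finite1}) with $v = u^N$. The viscous terms yield $2\mu\int_\Omega \mathbf{D}^2(u^N)\,dx + \nu\int_\Omega (\mathrm{div}\,u^N)^2\,dx$, and together with the boundary friction $f\int_\Gamma (u^N\cdot\tau)^2\,d\sigma$ these control $\|u^N\|_{H^1}^2$ for $f$ large enough, by the modified Korn inequality (\ref{Korn}). The convective term integrates by parts to
\begin{equation*}
\int_\Omega u^N\cdot\partial_{x_1} u^N\,dx = \tfrac{1}{2}\int_\Gamma n_1\,|u^N|^2\,d\sigma = \tfrac{1}{2}\int_\Gamma n_1 (u^N\cdot\tau)^2\,d\sigma,
\end{equation*}
using $u^N\cdot n|_\Gamma = 0$; its absolute value is bounded by $\int_\Gamma (u^N\cdot\tau)^2\,d\sigma$, so it can be absorbed into the friction term provided $f$ is sufficiently large.

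The main obstacle is the pressure coupling term $-\gamma\int_\Omega w^N\,\mathrm{div}\,u^N\,dx$. Here I would exploit the fact that the construction of $w^N$ gives exactly $\partial_{x_1} w^N = G - \mathrm{div}\,u^N$ together with $w^N|_{\Gamma_{in}}=0$. Substituting $\mathrm{div}\,u^N = G - \partial_{x_1}w^N$ and integrating by parts in $x_1$ gives
\begin{equation*}
-\gamma\int_\Omega w^N\,\mathrm{div}\,u^N\,dx = -\gamma\int_\Omega w^N G\,dx + \tfrac{\gamma}{2}\int_{\Gamma_{out}} n_1 (w^N)^2\,d\sigma,
\end{equation*}
where the boundary integral is nonnegative because $w^N$ vanishes on $\Gamma_{in}$ and $n_1>0$ on $\Gamma_{out}$; hence it can simply be dropped. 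The remaining term $-\gamma\int_\Omega w^N G\,dx$ is controlled via the Poincaré inequality in $W$ (Lemma \ref{Poin}), which yields $\|w^N\|_{L^2} \le \mathrm{diam}(\Omega)\|G - \mathrm{div}\,u^N\|_{L^2} \le C(\|G\|_{L^2} + \|u^N\|_{H^1})$.

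Combining everything and using Young's inequality on the lower-order terms $\int F\cdot u^N\,dx$ and $\gamma\int w^N G\,dx$, one obtains
\begin{equation*}
(P(c),c) \ge \tfrac{1}{2} C\,\|u^N\|_{H^1}^2 - \tilde C\bigl(\|F\|_{L^2}^2 + \|G\|_{L^2}^2\bigr).
\end{equation*}
Since $\|c\|_{\mathbb{R}^N} = \|u^N\|_{H^1}$, the right-hand side is strictly positive once $\|c\| = M$ with $M^2 = 4\tilde C(\|F\|_{L^2}^2 + \|G\|_{L^2}^2)/C$. Lemma \ref{lem_P} then delivers $c^*$ with $P(c^*)=0$ and $\|c^*\|\le M$, which gives both the solvability of (\ref{finite1}) and the estimate (\ref{ene_aprox}).
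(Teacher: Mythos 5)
Your proposal is correct and follows essentially the same route as the paper: apply Lemma \ref{lem_P}, control the viscous and friction terms via the modified Korn inequality, exploit $\partial_{x_1}w^N = G - \mathrm{div}\,u^N$ with $w^N|_{\Gamma_{in}}=0$ so that the quadratic part of the coupling term reduces to a nonnegative boundary integral on $\Gamma_{out}$, and bound the remainder with the Poincar\'e inequality in $W$. You merely make explicit two steps the paper leaves implicit (the boundary integration of the convective term and the sign of the $\Gamma_{out}$ contribution), which is fine.
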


\begin{proof}
In order to apply Lemma \ref{lem_P} we have to define an appropriate operator \\
$P^N: V^N \to V^N$.
For convenience let us define $B^N: \, V^N \times V^N \to \mathbf{R}$:
$$
B^N(\xi^N,v^N) = \int_{\Omega} v^N \partial_{x_1} \xi^N + 2\mu \int_{\Omega} D(\xi^N) : \nabla v^N
+ \nu \int_{\Omega} div \, \xi^N div \, v^N
$$$$
-\gamma \int_{\Omega} \{ \int_{\underline{x_1}}^{x_1} (G - div \, \xi^N)(s,x_2)\,ds \} \, div \,v^N \,dx
+f \, \int_{\Gamma} (\xi^N \cdot \tau) (v^N \cdot \tau) \,d \sigma - \int_{\Omega} F \cdot v^N \,dx.
$$
Now (\ref{finite1}) can be rewritten as $B(u^N,\phi_k)=0$ and thus it is natural to define
\begin{equation}  \label{PN}
P^N(\xi^N) = \sum_i B^N(\xi^N,\phi_k) \phi_k \quad \textrm{for} \quad \xi^N \in V^N.
\end{equation}
We have to verify the assumptions of Lemma \ref{lem_P}. Obviously $P^N:V^N \to V^N$ and it is a continuous
operator. For $\xi=\sum_{i} a_i^N \phi_i$ we have
\begin{eqnarray}   \label{PN1}
\big( P^N(\xi^N),\xi^N \big) = \big( \sum_{k=1}^N B^N(\xi^N,\phi_k) \phi_k , \sum_{i=1}^N a_i^N \phi_i ) = \nonumber\\
= \sum_{k=1}^N \{ B^N(\xi^N,\phi_k) \sum_{i=1}^N a_i^N \big(\phi_i, \phi_k \big) \}
= \sum_{k=1}^N B^N(\xi^N,\phi_k) a_k^N = B^N(\xi^N,\xi^N).
\end{eqnarray}
Using the definition of $B^N$ we can rewrite (\ref{PN1}) as
$$
\big( P^N(\xi^N),\xi^N \big) = \underbrace{2\mu \int_{\Omega} D^2(\xi^N) \,dx + \nu \int_{\Omega} div^2 \xi^N \,dx}_{I_1}
+ \underbrace{\int_{\Omega} \xi^N \partial_{x_1}\xi^N \,dx + \int_{\Gamma} f (\xi^N \cdot \tau)^2 \,d\sigma}_{I_2}
$$$$
\underbrace{-\gamma \int_{\Omega} \{ \int_{\underline{x_1}}^{x_1} (G - div \, \xi^N)(s,x_2)\,ds \} \, div\,\xi^N  \,dx}_{I_3}
- \int_{\Omega} F \cdot \xi^N \,dx.
$$
Using the Korn inequality (\ref{Korn}) we get $I_1+I_2 \geq C(\mu,\nu,\Omega) ||\xi^N||_{H^1}^2$
for $f$ large enough. Now let us denote
$$
\eta^N(x_1,x_2) = \int_{\underline{x_1}}^{x_1} (G - div \, \xi^N)(s,x_2)\,ds.
$$
Then $\eta^N_{x_1} = G - div \, \xi^N$ and we have
$$
I_3 = -\gamma \int_{\Omega} \eta^N \, div \, \xi^N \,dx =
\int_{\Omega} \eta^N \, \eta^N_{x_1} - \int_{\Omega} G \, \eta^N \,dx \geq
$$$$
\geq \int_{\Omega} G \, \eta^N \,dx \geq - C ||G||_{L^2(\Omega)} \, ||\eta^N||_{L^2(\Omega)}
\geq - C \, ||G||_{L^2(\Omega)} \, (||G||_{L^2(\Omega)} + ||\xi^N||_{H^1(\Omega)}).
$$
Combining these bounds we get
$$
\big( P^N(\xi^N),\xi^N \big) \geq C(\mu,\Omega) ||\xi^N||_{H^1(\Omega)}^2 - (||F||_{L^2(\Omega)}+||G||_{L^2(\Omega)}) ||\xi||_{H^1(\Omega)}
- ||G||_{L^2(\Omega)}^2.
$$
Thus there exists $\tilde C = \tilde C(\mu,\Omega,F,G)$ such that
$
\big( P^N(\xi^N),\xi^N \big) > 0 \quad \textrm{for} \quad ||\xi||=\tilde C,
$
and applying lemma \ref{lem_P} we conclude that
$
\exists \xi^*: \quad P^N(\xi^*)=0 \quad \textrm{and} \quad ||\xi^*|| \leq \tilde C.
$
But since  $\{\phi_i\}$  is a basis of $V$, the definition of $P^N$ (\ref{PN}) yields
$$
P^N(\xi^*)=0 \iff B^N(\xi^*,\phi_k) = 0, \; k=1 \ldots N,
$$
thus $\xi^*$ is a solution to (\ref{finite1})
\end{proof}
\subsection{Existence of weak solution}	\label{sec_weak_exist}
Now we show that the sequence $(\xi^N,\eta^N)$ constructed in previous section converges
to the weak solution of our problem.
\begin{tw} \label{th_weak}
Assume that $F,G \in L^2(\Omega)$ and $f$ is large enough.
Then there exists a weak solution $(u,w)$ to (\ref{system})
satisfying the estimate
\begin{equation}   \label{ene1}
||u||_V + ||w||_W \leq C(DATA).
\end{equation}
\end{tw}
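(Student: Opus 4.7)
The plan is to extract a weakly convergent subsequence of the Galerkin approximations $(u^N,w^N)$ constructed in the previous subsection, pass to the limit in the discrete equations, and obtain the estimate from weak lower semicontinuity of the norms. Since the whole system is linear in $(u,w)$, no genuine compactness argument for a nonlinear term is needed; the only subtle points are the presence of the integral-form density $w^N$ and the trace condition $w|_{\Gamma_{in}}=0$.

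First I would collect uniform bounds. Theorem before gives $\|u^N\|_{H^1(\Omega)}\leq C(DATA)$, and by construction $w^N_{x_1}=G-\mathrm{div}\,u^N$, so $\|w^N_{x_1}\|_{L^2(\Omega)}\leq \|G\|_{L^2(\Omega)}+C\|u^N\|_{H^1(\Omega)}\leq C(DATA)$. The Poincar\'e inequality in $W$ (\ref{Poin}) then yields $\|w^N\|_W\leq C(DATA)$. Since $V$ and $W$ are Hilbert spaces, along a subsequence (not relabelled)
\begin{equation*}
u^N\rightharpoonup u \text{ in } V,\qquad w^N\rightharpoonup w,\ w^N_{x_1}\rightharpoonup w_{x_1} \text{ in } L^2(\Omega).
\end{equation*}
The compact embedding $H^1(\Omega)\hookrightarrow L^2(\Omega)$ gives $u^N\to u$ strongly in $L^2(\Omega)$, and the trace operator $H^1(\Omega)\to L^2(\Gamma)$ is compact, so $u^N|_\Gamma\to u|_\Gamma$ strongly in $L^2(\Gamma)$ along a further subsequence. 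The closedness of $V$ in $H^1(\Omega)$ keeps $u\in V$; the continuity of the trace operator $W\to L^2(\Gamma_{in})$ combined with $w^N|_{\Gamma_{in}}=0$ (which follows from the very definition of $w^N$ via integration from $\underline{x_1}(x_2)$) gives $w\in W$.

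Next I would pass to the limit in (\ref{finite1}). Fix $M$ and a test function $v\in V^M$; for every $N\ge M$, relation (\ref{finite1}) holds with $\phi_k$ replaced by $v$. Every term on the left is a continuous linear functional of $(u^N,w^N,u^N|_\Gamma)$, so the weak convergences above, together with the strong trace convergence needed only for the friction term $\int_\Gamma f(u^N\cdot\tau)(v\cdot\tau)\,d\sigma$, let me pass to the limit and obtain (\ref{weak1}) for this particular $v$. By density of $\bigcup_M V^M$ in $V$ and continuity of each functional in $v\in H^1(\Omega)$, (\ref{weak1}) then holds for every $v\in V$. For the continuity equation the matter is even simpler: by the definition of $w^N$ the identity $\mathrm{div}\,u^N+w^N_{x_1}=G$ holds a.e., so (\ref{weak2}) holds for every $\eta\in W$ and every $N$, and the weak $L^2$-limit $\mathrm{div}\,u+w_{x_1}=G$ gives (\ref{weak2}) in the limit.

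Finally the estimate (\ref{ene1}) follows from the uniform bound $\|u^N\|_V+\|w^N\|_W\leq C(DATA)$ by weak lower semicontinuity of the norms. The only place where I expect to have to be careful is the friction boundary term and the trace condition on $\Gamma_{in}$: the former requires strong $L^2(\Gamma)$-convergence of traces, which is available via compactness of the trace from $H^1$ into $L^2$; the latter requires that $W$ be weakly closed, which follows from continuity of the $L^2(\Gamma_{in})$-trace on functions with $x_1$-derivative in $L^2(\Omega)$. No step beyond these linear functional-analytic arguments is expected to be the bottleneck.
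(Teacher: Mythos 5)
Your proposal is correct and follows essentially the same route as the paper: uniform bounds on the Galerkin approximations, extraction of weakly convergent subsequences, passage to the limit in the finite-dimensional relations for a fixed test function $v\in V^M$ with $N\ge M$, and density of $\bigcup_M V^M$ in $V$. The only (harmless) deviation is that you obtain (\ref{ene1}) by weak lower semicontinuity of the norms from the uniform bound on $(u^N,w^N)$, whereas the paper re-derives it by testing the limit equations with $v=u$, $\eta=w$ and invoking the Korn and Poincar\'e inequalities; both are valid, and your observation that weak convergence of traces already suffices for the linear friction term (so strong trace convergence is not actually needed) is accurate.
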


\begin{proof}
The estimate (\ref{ene_aprox}) together with (\ref{Poin}) gives
\begin{equation}   \label{ene_aprox_sum}
||u^N||_{H^1(\Omega)} + ||w^N||_{L^2(\Omega)} + ||w^N_{x_1}||_{L^2(\Omega)} \leq C(DATA).
\end{equation}
Since the sequence $\{w^N_{x_1}\}$ is bounded in $L^2(\Omega)$,
there exists a subsequence $w^{N_k}$ and a function $\zeta \in L^2$ such that
\mbox{$w^{N_k}_{x_1} \overset{L^2}{\rightharpoonup} \zeta$.}
Now let us denote for simplicity $w^N:=w^{N_k}$. It is bounded in $L^2$, thus there exists a
subsequence
\mbox{$w^{N_j} \overset{L^2}{\rightharpoonup} w$}
for some function $w \in L^2$. Now we need to show that $\zeta = w_{x_1}$, but this is quite obvious.
We have
$$
\forall v \in L^2: \quad
- \int_{\Omega} w^{N_k} \, v_{x_1} = \int_{\Omega} w^{N_k}_{x_1} \, v \to \int_{\Omega} \zeta \, v
\quad
\textrm{and}
\quad
\int_{\Omega} {w^{N_k}} \, v_{x_1} \to \int_{\Omega} w \, v_{x_1},
$$
thus $\int_{\Omega} \zeta \, v = - \int_{\Omega} w \, v_{x_1} \quad \forall v \in L^2(\Omega)$.

It is a bit more complicated to show the existence of $u$.
The estimate (\ref{ene_aprox_sum}) gives boundedness in $L^2(\Omega)$
of the sequences $\{ div \, u^N \}$ , $\{\partial_{x_1} u^N\}$ , $\{ \mathbf{D}(u^N) \}$
and boundedness in $L^2(\Gamma)$ of $\{(u^N \cdot \tau)\}$.
Thus up to a subsequence
$$
div \, u^N \overset{L^2}{\rightharpoonup} \xi,
\quad
\partial_{x_1} u^N \overset{L^2}{\rightharpoonup} \alpha,
\quad
D(u^N) \overset{L^2}{\rightharpoonup} \beta
\quad \textrm{and} \quad
u^N \cdot \tau \overset{L^2(\Gamma)}{\rightharpoonup} \delta
$$
for some $\xi,\alpha,\beta \in L^2(\Omega)$ and some $\delta \in L^2(\Gamma)$.
%
%
On the other hand, since the sequence $\{u^N\}$ is bounded in $H^1$, the compactness theorem
yields
$
u^N \overset{L^2}{\to} u
$
up to a subsequence for some $u \in L^2(\Omega)$. We want to show that in fact $u \in H^1$ and that $(u,w)$ satisfies
(\ref{weak1}) - (\ref{weak2}).
But we have $\forall \phi \in C^0_\infty$:
$$
- \int_{\Omega} u \, \partial_{x_1} \phi \leftarrow  - \int_{\Omega} u_n \partial_{x_1} \phi = \int_{\Omega} \phi \, \partial_{x_1} u_n \to \int_{\Omega} \alpha \, \phi
$$
thus $\alpha = \partial_{x_1} \, u$. Similarily we can verify that
\begin{eqnarray}
\left\{ \begin{array}{c}
\xi = div \, u \\
\beta = \mathbf{D}(u) \\
\delta = u \cdot \tau|_{\Gamma}.
\end{array} \right.
\end{eqnarray}
Thus $u \in H^1(\Omega)$, and the pair $(u,w)$ satisfies (\ref{weak1}) - (\ref{weak2})
\mbox{$\forall N \in \mathbf{N}  \quad \forall (v,\eta) \in V^N \times W$.} The density of $V^N$ in $V$
implies that it also satisfies (\ref{weak1}) - (\ref{weak2}) \mbox{$\forall (v,\eta) \in V \times W$.}
Thus indeed $(u,w)$ is a weak solution. The estimate (\ref{ene1}) is obtained in a standard way taking
$v=u$ and $\eta = w$ in (\ref{weak1}) - (\ref{weak2}) and then applying the Korn inequality (\ref{Korn})
and the Poincare inequality in $W$ (\ref{Poin}).
\end{proof}
\section{Regularity}  \label{sec_reg}
In this section we will show that the weak solution belongs to a class $H^2(\Omega) \times H^1(\Omega)$.
The idea of the proof has been outlined in the introduction.
We start with showing that if $(u,w)$ is a weak solution then $rot \, u \in H^1(\Omega)$.

Since on this level we have only weak solutions, we have to work with the weak formulation (\ref{weak1}) - (\ref{weak2}).
Consider a special class of test functions:
$$
V_1 = \{ v \in V: \quad \nabla^{\perp} \phi: \quad \phi \in H^2(\Omega), \; v \cdot n|_{\Gamma}=0, \; \phi|_{\Gamma}=0 \}
$$
where $\nabla^{\perp} = (\partial_{x_2}, - \partial_{x_1})$.
Note that on $\Gamma$ we have $\frac{\partial \phi}{\partial \tau} = v \cdot n = 0$. \\
Let us denote $\alpha = rot \,u = u^2_{x_1} - u^1_{x_2}$. \\
Since $div \,v = 0$ for $v \in V_1$, thus for $v \in V_1$ (\ref{weak1}) takes the form
\begin{equation}  \label{rot1}
\int_{\Omega} \alpha \partial_{x_1} \phi \,dx + 2\mu \int_{\Omega} \mathbf{D}(u) : \nabla v \,dx
= \int_{\Omega} F \cdot \nabla^{\perp} \phi \,dx - \int_{\Gamma} f (u \cdot \tau) \frac{\partial \phi}{\partial n} \,d\sigma
\end{equation}
\begin{lem} \label{lem_rot1}
For $u \in V, v \in V_1$ we have
\begin{equation} \label{lem_rot1_0}
\int_{\Omega} 2 \mu \mathbf{D}(u) : \nabla v \,dx = - \mu \int_{\Omega} \alpha \, \Delta \phi \,dx + \int_{\Gamma} 2(\mu \chi - f)(u \cdot \tau) \frac{\partial \phi}{\partial n} \,d\sigma
\end{equation}
where $\alpha = rot \,u$ and $\chi$ denotes the curvature of $\Gamma$.
\end{lem}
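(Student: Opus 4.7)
The plan is to establish the identity first for smooth $u\in V_0$ via direct integration by parts, and then extend it to $u\in V$ by density, exploiting that $V_0$ is dense in $V$ and that every term in the claimed identity is continuous in the $H^1$-topology of $u$ (with $\alpha=\mathrm{rot}\,u\in L^2(\Omega)$ depending linearly on $\nabla u$, and the boundary integral controlled by the trace theorem).

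For smooth $u$, I would apply the standard IBP formula (\ref{weak0}) to $v\in V_1$. Since $\operatorname{div} v=0$ and $v\cdot n|_\Gamma=0$ (the latter follows from $\phi|_\Gamma=0$, which makes $\nabla\phi$ purely normal and hence $v=\nabla^\perp\phi$ purely tangential on $\Gamma$), the $\nu$-bulk and $\nu$-boundary contributions drop out, and one more IBP also kills $\int_\Omega\nabla\operatorname{div}u\cdot v\,dx$, leaving
\begin{equation*}
\int_\Omega 2\mu\mathbf{D}(u):\nabla v\,dx=-\mu\int_\Omega\Delta u\cdot v\,dx+\int_\Gamma 2\mu(n\cdot\mathbf{D}(u)\cdot\tau)(v\cdot\tau)\,d\sigma.
\end{equation*}
The interior integral I rewrite via the planar identity $\int_\Omega\nabla^\perp\phi\cdot G\,dx=\int_\Omega\phi\,\mathrm{rot}\,G\,dx$, valid when $\phi|_\Gamma=0$, applied with $G=\Delta u$ and $\mathrm{rot}\,\Delta u=\Delta\alpha$; two further integrations by parts (again using $\phi|_\Gamma=0$) then give $\int_\Omega\Delta u\cdot v\,dx=\int_\Omega\alpha\Delta\phi\,dx-\int_\Gamma\alpha\,\partial_n\phi\,d\sigma$, producing the $-\mu\int_\Omega\alpha\Delta\phi\,dx$ term of the lemma together with a boundary remainder $\mu\int_\Gamma\alpha\,\partial_n\phi\,d\sigma$.

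All that remains is to assemble the boundary contributions. The two ingredients are that on $\Gamma$ one has $v\cdot\tau=\pm\partial_n\phi$, and the algebraic identity
\begin{equation*}
n\cdot 2\mathbf{D}(u)\cdot\tau=\alpha-2\chi(u\cdot\tau)\quad\text{on }\Gamma,
\end{equation*}
obtained from the decomposition $n\cdot 2\mathbf{D}(u)\cdot\tau=\partial_n u\cdot\tau+\partial_\tau u\cdot n$, the two-dimensional formula $\mathrm{rot}\,u=\partial_n u\cdot\tau-\partial_\tau u\cdot n$, and the relation $\partial_\tau u\cdot n=-\chi(u\cdot\tau)$ coming from tangential differentiation of $u\cdot n=0$ together with $\partial_\tau n=\chi\tau$. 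Combined with the slip condition $n\cdot 2\mu\mathbf{D}(u)\cdot\tau+f(u\cdot\tau)=0$, this algebraic identity yields the trace relation $\mu\alpha=(2\mu\chi-f)(u\cdot\tau)$ on $\Gamma$; substituting both relations into the two boundary integrals $\mu\int_\Gamma\alpha\,\partial_n\phi$ and $\int_\Gamma 2\mu(n\cdot\mathbf{D}(u)\cdot\tau)(v\cdot\tau)$ collapses them to precisely $\int_\Gamma 2(\mu\chi-f)(u\cdot\tau)\partial_n\phi\,d\sigma$, which completes the identity for smooth $u$.

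The main obstacle will be the algebraic boundary identity for $n\cdot 2\mathbf{D}(u)\cdot\tau$ together with consistent tracking of sign conventions (orientation of $\tau$, sign of $\chi$, form of $\nabla^\perp$), so that the interplay of the curvature contribution and the slip condition produces exactly the coefficient $2(\mu\chi-f)$ and nothing spurious survives. Once this step is secured, the density extension from $V_0$ to $V$ is routine.
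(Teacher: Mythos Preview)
Your proposal is correct and follows essentially the same route as the paper: density reduction to $V_0$, integration by parts to produce $-\mu\int_\Omega\Delta u\cdot v$, the computation $\int_\Omega\Delta u\cdot\nabla^\perp\phi=\int_\Omega\alpha\,\Delta\phi-\int_\Gamma\alpha\,\partial_n\phi$, and then the boundary identity $\mu\,\mathrm{rot}\,u|_\Gamma=(2\mu\chi-f)(u\cdot\tau)$ to collapse the boundary terms. The only cosmetic difference is that the paper invokes this last trace relation as a separate lemma (Lemma~\ref{rot_gamma}, quoted from \cite{PM3}), while you derive it inline from the algebraic identity $n\cdot 2\mathbf{D}(u)\cdot\tau=\alpha-2\chi(u\cdot\tau)$ combined with the slip condition; and the paper applies the slip condition to the $\int_\Gamma n\cdot 2\mu\mathbf{D}(u)\cdot v$ term immediately rather than at the end, which amounts to the same computation reorganized.
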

To prove lemma \ref{lem_rot1} we will use following auxiliary result, proved in \cite{PM3}:
\begin{lem} \label{rot_gamma}
For $u \in V_0$ we have
\begin{equation} \label{rot_gamma_0}
rot \,u|_{\Gamma} = (2 \chi - \frac{f}{\mu}) \, (u \cdot \tau),
\end{equation}
where $\chi$ is the curvature of $\Gamma$.
\end{lem}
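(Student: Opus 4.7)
The plan is to derive the identity pointwise on $\Gamma$ by splitting $\nabla u$ into its symmetric and antisymmetric parts, computing the bilinear contraction $n \cdot (\cdot) \cdot \tau$ on each piece, and then eliminating $n \cdot \mathbf{D}(u) \cdot \tau$ by means of the slip boundary condition encoded in the definition of $V_0$.

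The starting ingredient is the impermeability condition $u \cdot n = 0$ on $\Gamma$. I would extend $n$ and $\tau$ smoothly into a neighbourhood of $\Gamma$ (for instance $n = \nabla d$ with $d$ the signed distance and $\tau = n^\perp$ with the standard orientation), so that the planar Frenet formula yields $\partial_\tau n = \chi \tau$ on $\Gamma$. Differentiating $u \cdot n \equiv 0$ tangentially gives
$$
(\partial_\tau u)\cdot n + u \cdot \partial_\tau n = 0,
$$
hence $(\partial_\tau u)\cdot n = -\chi\,(u\cdot\tau)$ on $\Gamma$. This is the key step by which the no-penetration condition converts a derivative of $u$ into a curvature term.

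Next I would decompose $\nabla u = \mathbf{D}(u) + \mathbf{W}(u)$, where $\mathbf{W}(u)$ is the skew-symmetric part with entries $\mathbf{W}(u)_{12} = -\mathbf{W}(u)_{21} = -\tfrac12 rot\,u$. A direct index computation with $(\tau,n)$ orthonormal in the standard orientation (so that $n_1\tau_2 - n_2\tau_1 = 1$) yields
$$
n \cdot \mathbf{W}(u) \cdot \tau = -\tfrac12\, rot\,u \quad \mbox{on} \quad \Gamma,
$$
and combined with $n\cdot\nabla u\cdot\tau = n_i u^i_{x_j}\tau_j = (\partial_\tau u)\cdot n$ together with the previous step, this produces
$$
rot\,u = 2\, n \cdot \mathbf{D}(u)\cdot\tau + 2\chi\,(u\cdot\tau) \quad \mbox{on} \quad \Gamma.
$$

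Finally, the slip condition from the definition of $V_0$, namely $n\cdot 2\mu \mathbf{D}(u)\cdot\tau = -f(u\cdot\tau)$, turns the first term into $-\tfrac{f}{\mu}(u\cdot\tau)$, and rearranging gives exactly $rot\,u|_\Gamma = (2\chi - f/\mu)(u\cdot\tau)$. The only real obstacle is bookkeeping: the three sign-sensitive ingredients (the Frenet formula $\partial_\tau n = +\chi\tau$, the 2D identity $n_1\tau_2 - n_2\tau_1 = +1$, and the definition $rot\,u = u^2_{x_1} - u^1_{x_2}$) must be chosen with a mutually consistent orientation of $\tau$ relative to $n$, otherwise the coefficient $2\chi - f/\mu$ acquires a spurious sign.
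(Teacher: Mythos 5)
Your computation is correct and complete: the tangential differentiation of $u\cdot n=0$ giving $(\partial_\tau u)\cdot n=-\chi(u\cdot\tau)$, the identity $n\cdot\mathbf{W}(u)\cdot\tau=-\tfrac12 rot\,u$ for the skew part, and the elimination of $n\cdot\mathbf{D}(u)\cdot\tau$ via the slip condition fit together with consistent orientation conventions to yield exactly (\ref{rot_gamma_0}). The paper itself gives no proof of this lemma, only the citation to \cite{PM3}; your argument is precisely the standard derivation used there, so there is nothing to compare beyond noting that you have supplied the omitted details.
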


\textit{Proof of lemma \ref{lem_rot1}.} Due to density of $V_0$ in $V$ it is enough to proove (\ref{lem_rot1_0}) for\\
$u_{\epsilon} \in V_0, v \in V_1$. For such functions we have (we omit the subscript $\epsilon$):
$$
\int_{\Omega} 2 \mu \mathbf{D}(u) : \nabla \,v \,dx = - \int_{\Omega} 2 \mu div \mathbf{D}(u) \cdot v \,dx
+ \int_{\Gamma} n \cdot 2 \mu \mathbf{D}(u) \cdot v \,d\sigma
$$
Since we have $2 div \mathbf{D}(u) = \Delta \,u + \nabla div \,u$, and  using the definition of $V_0$ we can write
\begin{equation} \label{lem_rot1_1}
\int_{\Omega} 2 \mu \mathbf{D}(u) : \nabla \,v \,dx =
- \int_{\Omega} \mu (\Delta \,u + \nabla  \, div\,u) \cdot \nabla^{\perp} \phi \,dx
- \int_{\Gamma} f(u \cdot \tau) \frac{\partial \phi}{\partial n} \,d\sigma.
\end{equation}
Integration by parts yields
\begin{equation} \label{lem_rot1_2}
\int_{\Omega} \nabla  div\,u \cdot \nabla^{\perp} \phi \,dx =
\int_{\Gamma} div \,u  \, \frac{\partial \phi}{\partial \tau} \,d\sigma = 0
\end{equation}
and
\begin{eqnarray} \label{lem_rot1_3}
\int_{\Omega} \Delta u \cdot \nabla^{\perp} \phi =
\int_{\Omega} \phi \, \Delta rot \,u \,dx + \underbrace{\int_{\Gamma} \phi \, \Delta u \cdot \tau \,d\sigma}_{=0} \nonumber\\
= -\int_{\Omega} \nabla \phi \cdot \nabla rot\,u + \underbrace{\int_{\Gamma} \phi \, \frac{\partial}{\partial n} rot \,u}_{=0}
= \int_{\Omega} rot \,u \, \Delta \phi - \int_{\Gamma} rot \,u \, \frac{\partial \phi}{\partial n} \,d\sigma
\end{eqnarray}
Substituting (\ref{lem_rot1_2}) and (\ref{lem_rot1_3}) into (\ref{lem_rot1_1}) we get
\begin{equation}
\int_{\Omega} 2 \mu \mathbf{D}(u) : \nabla \,v \,dx =
- \mu \int_{\Omega} rot \,u \, \Delta \phi \,dx + \mu \int_{\Gamma} rot \,u \, \frac{\partial \phi}{\partial n} d\sigma
- \int_{\Gamma} f \, (u \cdot \tau) \,d\sigma,
\end{equation}
and application of (\ref{rot_gamma_0}) to the boundary term yields (\ref{lem_rot1_0})
$\square$

\smallskip

With lemma \ref{lem_rot1} (\ref{rot1}) takes the form
\begin{equation} \label{rot2}
\int_{\Omega} \alpha \partial_{x_1} \phi \,dx - \int_{Q} \alpha \Delta \phi \,dx = \int_{\Omega} F \cdot \nabla^{\perp} \phi \,dx
- \int_{\Gamma} (2 \mu \chi - f) (u \cdot \tau) \, \frac{\partial \phi}{\partial n} \,d\sigma.
\end{equation}
Since $u \in H^1(\Omega)$, we can construct $d \in H^1(\Omega)$ such that
\begin{eqnarray}  \label{d}
\left\{ \begin{array}{c}
d|_{\Gamma} = (2 \mu \chi - f)(u \cdot \tau), \\
||d||_{H^1(\Omega)} \leq C \, ||u||_{H^1(\Omega)}.
\end{array} \right.
\end{eqnarray}
Now consider a decomposition $\alpha = b + d$ where $b|_{\Gamma} = 0$.
From (\ref{rot2}) we see that the function $b$ satisfies
\begin{equation} \label{b}
\int_{\Omega} b \, \partial_{x_1} \phi \,dx + \int_{\Omega} \nabla b \cdot \nabla \phi \,dx
= - \int_{\Omega} d \partial_{x_1} \phi \,dx + \int_{\Omega} F \cdot \nabla^{\perp} \phi \,dx
- \int_{\Omega} \nabla d \cdot \nabla \phi \,dx.
\end{equation}
Inverting the above reasoning we can prove
\begin{lem}  \label{lem_rot}
Assume that $(u,w)$ is a weak solution to (\ref{system}). Then $rot \,u \in H^1(\Omega)$
and
$$
||rot \,u||_{H^1(\Omega)} \leq C(DATA).
$$
\end{lem}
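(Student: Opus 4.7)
The plan is to construct $\mathrm{rot}\,u$ in $H^1(\Omega)$ as the sum $b + d$, where $d \in H^1(\Omega)$ is the lift built in (\ref{d}) and $b$ is obtained as the unique $H^1_0$-solution of the variational problem (\ref{b}), and then to identify this constructed function with the actual vorticity of the weak solution.

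First I would apply the Lax--Milgram lemma to the bilinear form
$$a(b,\phi) := \int_{\Omega} b\,\partial_{x_1}\phi\,dx + \int_{\Omega} \nabla b \cdot \nabla \phi \,dx$$
on $H^1_0(\Omega)$. Coercivity is immediate: integration by parts gives $\int_\Omega b\,\partial_{x_1}b\,dx = \tfrac{1}{2}\int_\Gamma b^2 n_1\,d\sigma = 0$ for $b \in H^1_0(\Omega)$, so $a(b,b) = \|\nabla b\|_{L^2}^2$, which controls $\|b\|_{H^1}^2$ via the Poincar\'e inequality. The right-hand side of (\ref{b}) defines a bounded linear functional on $H^1_0(\Omega)$ whose norm is controlled, using (\ref{d}) and $F \in L^2(\Omega)$, by $C(\|u\|_{H^1} + \|F\|_{L^2}) \leq C(DATA)$ thanks to (\ref{ene1}). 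This produces a unique $b \in H^1_0(\Omega)$ with $\|b\|_{H^1} \leq C(DATA)$, and so the candidate $\tilde\alpha := b + d$ sits in $H^1(\Omega)$ with the same bound.

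The main task is then to show $\tilde\alpha = \mathrm{rot}\,u$, which amounts to reversing the integration by parts that led from (\ref{rot2}) to (\ref{b}). For $\phi \in H^2(\Omega) \cap H^1_0(\Omega)$, I would use $b|_\Gamma = 0$ to rewrite $\int_\Omega \nabla b \cdot \nabla\phi\,dx = -\int_\Omega b\,\Delta\phi\,dx$, and the boundary condition on $d$ to get $\int_\Omega \nabla d \cdot \nabla\phi\,dx = -\int_\Omega d\,\Delta\phi\,dx + \int_\Gamma(2\mu\chi - f)(u\cdot\tau)\partial_n\phi\,d\sigma$. Plugging these into (\ref{b}) shows that $\tilde\alpha$ satisfies precisely (\ref{rot2}) against every such $\phi$. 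Since $\alpha = \mathrm{rot}\,u \in L^2(\Omega)$ also satisfies (\ref{rot2}) (that identity is what produced (\ref{b}) in the first place), the difference $z := \tilde\alpha - \alpha \in L^2(\Omega)$ obeys
$$\int_\Omega z\,(\partial_{x_1}\phi - \Delta\phi)\,dx = 0 \quad \text{for every } \phi \in H^2(\Omega)\cap H^1_0(\Omega).$$
To conclude $z \equiv 0$ I would use standard elliptic theory: for arbitrary $\psi \in L^2(\Omega)$, the problem $-\Delta\phi + \partial_{x_1}\phi = \psi$ with $\phi|_\Gamma = 0$ admits a unique $H^1_0$-solution by Lax--Milgram, and the $H^{5/2}$-regularity of $\Gamma$ upgrades it to $H^2(\Omega)$. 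Surjectivity of $\phi \mapsto \partial_{x_1}\phi - \Delta\phi$ onto $L^2(\Omega)$ then forces $z = 0$, so $\mathrm{rot}\,u = \tilde\alpha \in H^1(\Omega)$ with the required estimate.

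The step I expect to be most delicate is the identification $\tilde\alpha = \mathrm{rot}\,u$. The Lax--Milgram construction produces $b$ abstractly, with no direct link to $u$; bridging the gap requires carefully tracing back through the integration-by-parts chain that produced (\ref{b}), in particular verifying that the curvature boundary term generated by integrating $\int_\Omega \nabla d \cdot \nabla\phi\,dx$ against $\phi$ precisely matches the boundary term on the right-hand side of (\ref{rot2}). After that, obtaining $z \equiv 0$ is routine provided one has enough smoothness of $\partial\Omega$ to quote $H^2$ elliptic regularity for the dual problem, which is why the $H^{5/2}$-regularity assumed on $\Gamma$ enters at this stage.
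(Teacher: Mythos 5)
Your proposal follows the paper's proof essentially verbatim: solve the variational problem (\ref{b}) for $b \in H^1_0(\Omega)$, set $\alpha^* = b + d$ with $d$ from (\ref{d}), and identify $\alpha^*$ with $\mathrm{rot}\,u$ by showing both satisfy (\ref{rot2}). The only difference is that you supply details the paper leaves implicit --- the Lax--Milgram coercivity computation $\int_\Omega b\,\partial_{x_1}b\,dx = 0$ and the duality argument (surjectivity of $\phi \mapsto \partial_{x_1}\phi - \Delta\phi$ from $H^2(\Omega)\cap H^1_0(\Omega)$ onto $L^2(\Omega)$) justifying the paper's bare assertion that $\alpha^* = \mathrm{rot}\,u$.
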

\begin{proof}
Consider a problem: find $b \in H^1_0(\Omega)$ satisfying (\ref{b})
$\forall \phi \in H^1_0(\Omega)$. Obviously this problem has a solution $b \in H^1_0(\Omega)$
satisfying
$$
||b||_{H^1_0} \leq C(DATA, d) \leq C(DATA,||u||_{H^1}) \leq C(DATA).
$$
In particular $b$ satisfies (\ref{b}) $\forall \phi \in H^1_0 \cap H^2$.
Thus if we define $\alpha^* = b + d$, where $d$ is given by (\ref{d}), then
$||\alpha^*||_{H^1} \leq C(DATA)$
and $\alpha^*$ satisfies (\ref{rot2}) $\forall \phi \in H^1_0 \cap H^2$. But this means
that $\alpha^* = rot \,u$
\end{proof}

We will use this fact together with a well known result,
the Helmholtz decomposition in $H^1(\Omega)$ (\cite{Ga1},\cite{No}):

\begin{lem} (Helmholtz Decomposition)
For $u \in H^1(\Omega)$, there exists $\psi,A \in H^2(\Omega)$ such that
$n \cdot A^{\perp}|_{\Gamma} = 0$ and
\begin{equation}
u = \nabla \psi + \nabla^{\perp} A. \label{Helm}
\end{equation}
\end{lem}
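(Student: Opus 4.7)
The plan is to build the two potentials one at a time, starting from the rotational part, and then recovering the gradient part as the scalar potential of whatever is left. Concretely, since $u \in H^1(\Omega)$ we have $\operatorname{rot} u \in L^2(\Omega)$, so I would first let $A$ be the unique solution of the Dirichlet problem
\begin{equation*}
-\Delta A = \operatorname{rot} u \quad \text{in } \Omega, \qquad A|_{\Gamma} = 0.
\end{equation*}
Because $\Omega$ is convex with $H^{5/2}$ boundary, standard elliptic regularity for the Laplacian with homogeneous Dirichlet data gives $A \in H^2(\Omega)$ with $\|A\|_{H^2(\Omega)} \leq C\,\|\operatorname{rot} u\|_{L^2(\Omega)} \leq C\,\|u\|_{H^1(\Omega)}$. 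The Dirichlet condition $A|_{\Gamma} = 0$ forces $\partial_{\tau} A|_{\Gamma} = 0$, which in the 2D notation of the paper is exactly $n \cdot \nabla^{\perp} A|_{\Gamma} = 0$.

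Next I would introduce the residual field $v := u - \nabla^{\perp} A \in H^1(\Omega)$ and observe that $v$ is curl-free in the sense of distributions: indeed, $\operatorname{rot}(\nabla^{\perp} A) = -\Delta A = \operatorname{rot} u$, so $\operatorname{rot} v = 0$ in $\mathcal{D}'(\Omega)$. Since $\Omega$ is convex in $\mathbf{R}^2$, it is simply connected, and therefore every curl-free $H^1$ vector field on $\Omega$ admits a scalar potential: there exists $\psi$, unique up to an additive constant, such that $\nabla \psi = v$ on $\Omega$. One can construct $\psi$ explicitly by integrating $v$ along piecewise affine paths from a fixed basepoint, and well-posedness follows from the vanishing of the line integral of $v$ around any closed curve.

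Because $\nabla \psi = v \in H^1(\Omega)$, the potential $\psi$ automatically lies in $H^2(\Omega)$, and adding the two pieces gives the required decomposition $u = \nabla \psi + \nabla^{\perp} A$ with both potentials in $H^2(\Omega)$ and with the stated boundary condition on $A$.

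The only delicate point is the $H^2$ regularity of $A$: this is where the geometric hypotheses on $\Omega$ enter, through the classical $H^2$-regularity result for the Dirichlet Laplacian on convex (or $C^{1,1}$) domains. Everything else — the curl-free identity and the construction of $\psi$ — is a direct consequence of simple connectedness, and so presents no real obstruction. This is the content cited from \cite{Ga1} and \cite{No}.
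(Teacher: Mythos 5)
The paper does not prove this lemma at all --- it is quoted as a known result with references to \cite{Ga1} and \cite{No} --- so you have supplied a self-contained argument where the paper supplies a citation. Your construction is correct and is one of the two standard routes: you build the stream function first by solving the Dirichlet problem $-\Delta A=\operatorname{rot}u$, $A|_{\Gamma}=0$ (using $H^2$ regularity on a convex domain), and then recover $\psi$ as the potential of the curl-free remainder; the more common textbook route solves the Neumann problem $\Delta\psi=\operatorname{div}u$, $\partial_n\psi=u\cdot n$ first and represents the divergence-free remainder by a stream function. Your order of construction has the advantage of delivering the boundary condition $n\cdot\nabla^{\perp}A|_{\Gamma}=\partial_{\tau}A|_{\Gamma}=0$ for free from the Dirichlet data, and it matches exactly how the lemma is used later (Lemma \ref{lem_A} solves precisely this Dirichlet problem to upgrade $A$ to $H^3$). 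Two small remarks. First, your sign $\operatorname{rot}(\nabla^{\perp}A)=-\Delta A$ is the correct one for the paper's convention $\nabla^{\perp}=(\partial_{x_2},-\partial_{x_1})$; the paper's own Lemma \ref{lem_A} writes $\Delta A=\operatorname{rot}u$, which corresponds to the opposite orientation of $\nabla^{\perp}$ --- harmless either way, since one may replace $A$ by $-A$. Second, the construction of $\psi$ by ``integrating along piecewise affine paths'' is only literally meaningful for continuous fields, and $v\in H^1(\Omega)$ in two dimensions need not be continuous; the clean justification is the distributional Poincar\'e lemma (every curl-free $L^2$ field on a simply connected Lipschitz domain is a gradient), after which $\nabla\psi=v\in H^1(\Omega)$ and boundedness of $\Omega$ give $\psi\in H^2(\Omega)$ as you say. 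Neither point is a genuine gap.
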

Now our goal is to show that if $(u,w)$ is a solution to (\ref{weak1}) - (\ref{weak2})
then $\psi,A \in H^3(\Omega)$, thus $u \in H^2(\Omega)$.
\begin{lem}	\label{lem_A}
Assume that $(u,w)$ is a weak solution to (\ref{system}) and $(\psi,A)$
is the Helmholtz decomposition of $u$.
Then $A \in H^3(\Omega)$ and $||A||_{H^3(\Omega)} \leq C(DATA)$.
\end{lem}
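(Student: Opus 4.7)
The plan is to reduce the claim to standard elliptic regularity for a Dirichlet problem for the Laplacian, using the fact that in two dimensions the curl of a gradient vanishes and $\text{rot}(\nabla^\perp A) = -\Delta A$.

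First I would apply the $\text{rot}$ operator to the Helmholtz decomposition $u = \nabla \psi + \nabla^\perp A$. Since $\text{rot}(\nabla \psi) \equiv 0$ and a direct computation gives
\begin{equation*}
\text{rot}(\nabla^\perp A) = \partial_{x_1}(-A_{x_1}) - \partial_{x_2}(A_{x_2}) = -\Delta A,
\end{equation*}
we obtain the pointwise identity $-\Delta A = \text{rot}\, u$ in $\Omega$. By lemma \ref{lem_rot} we already know $\text{rot}\, u \in H^1(\Omega)$ with norm controlled by $C(DATA)$, so $A$ satisfies an elliptic equation with $H^1$-right-hand side.

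Next I would examine the boundary condition. The Helmholtz decomposition provides $n \cdot \nabla^\perp A|_\Gamma = 0$, which, since $\tau = (-n_2,n_1)$, is exactly $\partial_\tau A|_\Gamma = 0$. Because $\Omega$ is convex, $\Gamma$ is connected, so $A$ is constant on $\Gamma$; by adding a constant to $A$ (which does not change $\nabla^\perp A$, hence does not affect the decomposition) I may assume $A|_\Gamma = 0$. Thus $A$ is the weak solution of the Dirichlet problem
\begin{equation*}
-\Delta A = \text{rot}\, u \;\; \text{in } \Omega, \qquad A|_\Gamma = 0.
\end{equation*}

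Finally I would invoke standard elliptic regularity theory for the Dirichlet Laplacian. With $\text{rot}\, u \in H^1(\Omega)$ and $\partial\Omega$ of class $H^{5/2}$ (which is precisely the regularity needed to lift $H^1$ data to $H^3$ solutions), one obtains $A \in H^3(\Omega)$ together with the estimate
\begin{equation*}
\|A\|_{H^3(\Omega)} \leq C(\Omega)\,\|\text{rot}\, u\|_{H^1(\Omega)} \leq C(DATA),
\end{equation*}
where the last bound uses lemma \ref{lem_rot}. There is no serious obstacle here: everything reduces to a Dirichlet problem for $-\Delta$ with $H^1$ source, and the only point requiring care is that the $H^{5/2}$ regularity of $\Gamma$ assumed throughout the paper is exactly what is needed to push the solution to $H^3$.
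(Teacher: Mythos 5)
Your proof is correct and follows essentially the same route as the paper: read off $\Delta A = \pm\,\mathrm{rot}\,u \in H^1(\Omega)$ from the decomposition, observe that $n\cdot\nabla^\perp A|_\Gamma = \partial_\tau A|_\Gamma = 0$ forces $A|_\Gamma$ to be constant (which you may normalize to zero), and invoke elliptic regularity for the Dirichlet Laplacian on an $H^{5/2}$ boundary together with lemma \ref{lem_rot}. The only (immaterial) discrepancy is the sign: with the paper's convention $\nabla^\perp=(\partial_{x_2},-\partial_{x_1})$ your computation $\mathrm{rot}(\nabla^\perp A)=-\Delta A$ is the correct one, whereas the paper writes $\Delta A$.
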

\begin{proof}
On the boundary we have
$
n \cdot A^{\perp} = \tau \cdot \nabla A = \frac{\partial A}{\partial \tau},
$
thus the condition $n \cdot A^{\perp}|_{\partial Q} =0$ yields
$A|_{\partial Q} = \textrm{const}$. Moreover,
\mbox{$rot \,u = rot(\nabla \psi + A^{\perp}) = rot \, A^{\perp} = \Delta A.$}
We see that $A$ is a solution to the following boundary value problem:
\begin{eqnarray*}
\left\{ \begin{array}{c}
\Delta A = \alpha \\
A|_{\Gamma} = \textrm{const},
\end{array} \right.
\end{eqnarray*}
where $\alpha = rot \,u \in H^1(\Omega)$.
Since the boundary is of class $H^{5/2}$, the standard theory of elliptic equations yields
$A \in H^3(\Omega)$ and $||A||_{H^3(\Omega)} \leq C(\Omega) ||\alpha||_{H^1(\Omega)}$
\end{proof}

\smallskip

Now we want to show that also $\psi \in H^3(\Omega)$. We have $div \,u = \Delta \psi$
and on the boundary we have
$0 = u \cdot n = (\nabla \psi + \nabla^{\perp} A) \cdot n = \nabla \psi \cdot n$.
Thus $\psi$ satisfies
\begin{eqnarray}  \label{psi}
\left\{ \begin{array}{c}
\Delta \psi = div \,u \nonumber\\
\frac{\partial \psi}{\partial n}|_{\Gamma} = 0,
\end{array} \right.
\end{eqnarray}
and in order to prove that $\psi \in H^3(\Omega)$ it is enough to show that $div \,u \in H^1(\Omega)$.
The next step is to prove the following
\begin{lem} \label{lem_divu_w}
Assume that $(u,w)$ is a weak solution to the system (\ref{system}). Then
\begin{equation} \label{divu_w}
-(2 \mu + \nu) \, div \,u + \gamma \,w =: H \in H^1(\Omega)
\end{equation}
and
\begin{equation}
||-(2 \mu + \nu) div \,u + \gamma \,w||_{H^1(\Omega)} \leq C(DATA).
\end{equation}
\end{lem}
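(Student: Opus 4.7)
The plan is to recover the momentum equation (\ref{system})$_1$ in the distributional sense and then read off $\nabla H$ from a rearrangement that absorbs the second-order terms into Lemma \ref{lem_rot}. First observe that $H\in L^2(\Omega)$ with $\|H\|_{L^2}\le C(DATA)$: indeed $\mathrm{div}\,u\in L^2(\Omega)$ because $u\in V\subset H^1(\Omega)$, $w\in L^2(\Omega)$ by Theorem \ref{th_weak}, and both norms are controlled by (\ref{ene1}). Consequently the entire task reduces to showing $\nabla H\in L^2(\Omega)^2$.

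To this end I would test (\ref{weak1}) against arbitrary $v\in C_c^\infty(\Omega)^2$. Any such $v$ vanishes near $\Gamma$ and hence lies in $V_0\subset V$, so the boundary integral disappears, and the remaining volume terms may be transferred onto $u$ and $w$ by distributional integration by parts, using $2\,\mathrm{div}\,\mathbf{D}(u)=\Delta u+\nabla\,\mathrm{div}\,u$. This produces
\[
\partial_{x_1} u-\mu\,\Delta u-(\nu+\mu)\,\nabla\,\mathrm{div}\,u+\gamma\,\nabla w=F \qquad \text{in }\mathcal{D}'(\Omega),
\]
i.e.\ the strong form of (\ref{system})$_1$ holds as an identity of distributions.

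Next I would invoke the planar vector identity $\Delta u=\nabla\,\mathrm{div}\,u-\nabla^\perp\,\mathrm{rot}\,u$ to collect the two $\nabla\,\mathrm{div}\,u$ contributions. The distributional equation above rewrites as
\[
\nabla\bigl(-(2\mu+\nu)\,\mathrm{div}\,u+\gamma w\bigr)=F-\partial_{x_1} u-\mu\,\nabla^\perp\,\mathrm{rot}\,u \qquad \text{in }\mathcal{D}'(\Omega).
\]
Every term on the right-hand side belongs to $L^2(\Omega)$: $F$ by assumption, $\partial_{x_1}u$ because $u\in H^1(\Omega)$, and $\nabla^\perp\,\mathrm{rot}\,u$ by Lemma \ref{lem_rot}. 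Thus $\nabla H$ coincides, as a distribution, with an $L^2(\Omega)^2$ vector field, and combining this with $H\in L^2(\Omega)$ yields $H\in H^1(\Omega)$ together with the quantitative estimate $\|H\|_{H^1}\le C(DATA)$.

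I do not anticipate any real obstacle once Lemma \ref{lem_rot} is available; the argument is essentially formal manipulation of a distributional identity. The only point deserving a brief justification is the passage from the weak identity posed on $v\in V$ to the distributional form of the interior equation, but this is automatic because $C_c^\infty(\Omega)^2\subset V_0$ (compactly supported smooth vector fields trivially satisfy both the impermeability and the slip condition on $\Gamma$), so no regularity of $u$ up to $\Gamma$ is required.
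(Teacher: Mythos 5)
Your proof is correct, and it reaches the same conclusion by a slightly different, somewhat more economical route than the paper. The paper substitutes the Helmholtz decomposition $u=\nabla\psi+\nabla^{\perp}A$ into the weak momentum equation and uses Lemma \ref{lem_A} (elliptic regularity giving $A\in H^{3}(\Omega)$) to absorb the term $\mu\,\Delta\nabla^{\perp}A$ into an $L^{2}$ right-hand side, arriving at $\nabla\bigl[-(2\mu+\nu)\Delta\psi+\gamma w\bigr]\in L^{2}(\Omega)$. Since $\Delta\nabla^{\perp}A=\nabla^{\perp}\Delta A=\nabla^{\perp}\mathrm{rot}\,u$, this is algebraically identical to your use of the planar identity $\Delta u=\nabla\,\mathrm{div}\,u-\nabla^{\perp}\mathrm{rot}\,u$; the underlying idea — control the rotational part of the second-order terms via the already-established $H^{1}$ bound on the vorticity — is the same. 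What your version buys is twofold: it needs only Lemma \ref{lem_rot} and not the Helmholtz decomposition or Lemma \ref{lem_A} (so the logical dependencies are thinner, even though the paper needs those tools anyway for the subsequent step $\psi\in H^{3}$), and it is cleaner on a point the paper glosses over — the paper first integrates by parts ``for $u\in V_{0}$'' and then remarks that the identity should be understood weakly, whereas you test against $v\in C_{c}^{\infty}(\Omega)^{2}\subset V_{0}$ from the outset, so every manipulation is a legitimate distributional identity requiring no boundary regularity of $u$. The only ingredient you should state explicitly is the quantitative side: $\|\nabla^{\perp}\mathrm{rot}\,u\|_{L^{2}}\le\|\mathrm{rot}\,u\|_{H^{1}}\le C(DATA)$ and $\|\partial_{x_{1}}u\|_{L^{2}}\le C(DATA)$ by (\ref{ene1}), which together with your $L^{2}$ bound on $H$ give the claimed estimate.
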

\begin{proof}
For $u \in V_0$ we can integrate by parts in (\ref{weak1}) and using (\ref{weak0}) we obtain
\begin{equation} \label{weak1a}
\int_{\Omega} \{ v \cdot \partial_{x_1} u - [\mu \, \Delta \,u + (\nu+\mu)  \nabla div \,u] \cdot v \} \,dx
- \int_{\Omega} \gamma w \, div \,v \,dx
= \int_{\Omega} F \cdot v \,dx.
\end{equation}
Substituting the Helmholtz decomposition to (\ref{weak1a}) we get
\begin{eqnarray} \label{weak1_helm}
\int_{\Omega} - (\nu+2\mu) \nabla (\Delta \psi) \cdot v \,dx - \int_{\Omega} \gamma w \, div v \,dx = \nonumber\\
\int_{\Omega} \underbrace{ \big( F - \partial_{x_1}(\nabla \psi + \nabla^{\perp} A) + \mu \Delta \nabla^{\perp} A \big) }_{\tilde F} \cdot v \,dx.
\end{eqnarray}
From lemma \ref{lem_A} we see that $\tilde F \in L^2(\Omega)$\ and $||\tilde F||_{L^2(\Omega)} \leq C(DATA)$.
Integrating formally by parts the second term of the r.h.s. of (\ref{weak1_helm}) we get
\begin{equation}
\int_{\Omega} [- (2\mu+\nu) \nabla (\Delta \psi) + \gamma \nabla w ] \cdot v \,dx =
\int_{\Omega} \tilde F \cdot v \,dx.
\end{equation}
At the beginning we assumed that $u \in V_0$ in order to write (\ref{weak1a}),
but we can understand the identity
$2 \mathbf{D}(u) = \Delta u + \nabla div \,u$ in a weak sense, and thus we have
$$
\tilde F = \nabla [- (2\mu+\nu) \Delta \psi + \gamma w] = \nabla [- (2\mu+\nu) div \,u + w]
$$
and so lemma \ref{lem_divu_w} is proved
\end{proof}

Combining (\ref{divu_w}) and (\ref{system})$_2$ we get
\begin{equation}  \label{w_wx1}
\frac{\gamma}{\nu + 2\mu} \,w + w_{x_1} = \frac{H}{2\mu + \nu} + G =: \tilde H \in H^1(\Omega).
\end{equation}
We see that the density is a solution to a transport equation.
Our goal is now to use this fact to show that $w \in H^1(\Omega)$.
Since we already know that $w \in L^2(\Omega)$, the problem reduces to showing
that $w_{x_2} \in L^2(\Omega)$. A natural way to extract some information on $w_{x_2}$
from the equation (\ref{w_wx1}) is to differentiate it with respect to $x_2$.
For simplicity we will write
$\gamma := \frac{\gamma}{2 \mu + \nu}$ and $H := \tilde H$. We have
$
\gamma w + w_{x_1} = e^{- \gamma x_1} \partial_{x_1}(e^{\gamma x_1} w),
$
thus differentiating (\ref{w_wx1}) with respect to $x_2$ we get
\begin{equation} \label{wx2_1}
\partial_{x_2} \big[ e^{- \gamma x_1} \partial_{x_1}(e^{\gamma x_1} w) \big] = \partial_{x_2} H \in L^2(\Omega).
\end{equation}
We want to use the above identity to define $w_{x_2}$ in an appropriate way. In order to do this
assume first that $w_{x_2} \in L^2(\Omega)$ is well defined. Then (\ref{wx2_1}) can be rewritten as
\mbox{$e^{-\gamma x_1} \partial_{x_1} \big[ e^{\gamma x_1} w_{x_2} \big] = \partial_{x_2}H$},
thus
\begin{equation}  \label{wx2_2}
\partial_{x_1} \big[ e^{\gamma x_1} w_{x_2} \big] = e^{\gamma x_1} \partial_{x_2}H =: \alpha
\end{equation}
If we assume also that $w_{x_2}$ is well defined on $\Gamma_{in}$, then we can write:
\begin{equation} \label{wx2_3}
e^{\gamma \, x_1} w_{x_2}(x_1,x_2) = e^{\gamma \, \underline{x_1}(x_2)} w_{x_2}(\underline{x_1}(x_2)) + \int_{\underline{x_1}(x_2)}^{x_1} \alpha(s,x_2) \,ds.
\end{equation}
This identity will enable us to define $w_{x_2}$ on $\Omega$ provided that it is well defined on $\Gamma_{in}$.
The boundary condition $w|_{\Gamma_{in}} = 0$ implies that the tangent derivative of $w$ is well defined
on $\Gamma_{in}$:
\mbox{$\frac{\partial}{\partial \tau} w|_{\Gamma_{in}} = 0.$}
Provided that the first order derivatives of $w$ are well defined on $\Gamma_{in}$, this identity
can be rewritten as
\begin{equation}  \label{bdry1}
\tau^1 w_{x_1} + \tau^2 w_{x_2} = 0,
\end{equation}
but due to (\ref{w_wx1}) we have
$
w_{x_1}|_{\Gamma_{in}}  = (w + w_{x_1})|_{\Gamma_{in}} = H|_{\Gamma_{in}} \in H^{1/2}(\Gamma_{in}),
$
and thus (\ref{bdry1}) can be rewritten as
\mbox{$w_{x_2}|_{\Gamma_{in}} = - \frac{\tau^1}{\tau^2} \, H|_{\Gamma_{in}}.$}
Note that on $\Gamma_{in}$ we have $\frac{\tau^1}{\tau^2} = \underline{x_1}'(x_2)$ (Fig. \ref{rys3}),
thus we can rewrite (\ref{wx2_3}) as
$$
w_{x_2}(x_1,x_2) = e^{-\gamma \,x_1}
\big[- e^{\gamma \, \underline{x_1}(x_2)} \underline{x_1}'(x_2)H(\underline{x_1}(x_2),x_2)
+ \int_{\underline{x_1}(x_2)}^{x_1} \alpha(s,x_2) \,ds \big].
$$
\begin{figure}
\begin{center}
\includegraphics{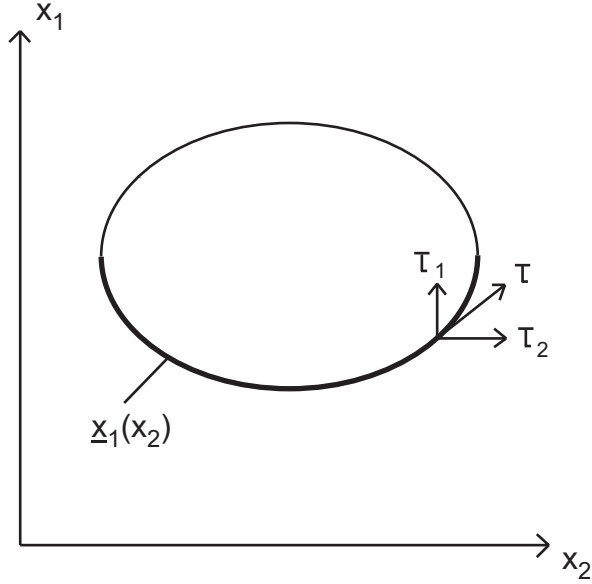}
\caption{The function $\underline{x_1}(x_2)$ and the tangent vector}       \label{rys3}
\end{center}
\end{figure}
Since $\alpha \in L^2(\Omega)$, we see that
$$
w_{x_2}(x_1,x_2) \in L^2(\Omega) \iff
\int_{x_2*}^{x_2^*} \int_{\underline{x_1}(x_2)}^{\overline{x_1}(x_2)} [\underline{x_1}'(x_2) \, H(\underline{x_1}(x_2),x_2)]^2 dx_1\,dx_2 < \infty.
$$
Since the function under integration doesn't depend on $x_1$, we can rewrite the r.h.s. of the above as
$$
\int_{x_2*}^{x_2^*} [\overline{x_1}(x_2) - \underline{x_1}(x_2)] [\underline{x_1}'(x_2)]^2 \, H^2(\underline{x_1}(x_2),x_2) \,dx_2.
$$
For simplicity let us denote $H(x_2):=H(\underline{x_1}(x_2),x_2)$. The above considerations leads to the following conclusion:
\begin{lem}  \label{lem_wx2}
Let $(u,w)$ be a weak solution to (\ref{system}). Assume that
\begin{equation}   \label{lem_wx2_1}
\int_{x_{2*}}^{x_2^*} \beta(x_2) |\underline{x_1}'(x_2)| H^2(x_2) \,dx_2 < \infty,
\end{equation}
where
\begin{equation}          \label{beta}
\beta(x_2) = [\overline{x_1}(x_2) - \underline{x_1}(x_2)] |\underline{x_1}'(x_2)|
\end{equation}
and define
\begin{equation}
\lambda(x_1,x_2) = e^{-\gamma \,x_1} \big[- e^{\gamma \, \underline{x_1}(x_2)} \underline{x_1}'(x_2) \, H(\underline{x_1}(x_2)) + \int_{\underline{x_1}(x_2)}^{x_1} \alpha(s,x_2) \,ds \big],
\end{equation}
where $\alpha$ is defined in (\ref{wx2_2}).

Then $\lambda \in L^2(\Omega)$, $||\lambda||_{L^2(\Omega)} \leq C(DATA)$ and $\lambda = w_{x_2}$.
\end{lem}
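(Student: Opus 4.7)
The plan is to verify the two assertions of the lemma separately: the $L^2$-bound on $\lambda$, and the identification $\lambda = w_{x_2}$ in the distributional sense. Both rest on the decomposition $\lambda = \lambda_b + \lambda_v$ into the boundary-data term
$$\lambda_b(x_1,x_2) = -e^{\gamma(\underline{x_1}(x_2)-x_1)}\,\underline{x_1}'(x_2)\,H(x_2)$$
and the volume term $\lambda_v(x_1,x_2) = e^{-\gamma x_1}\int_{\underline{x_1}(x_2)}^{x_1}\alpha(s,x_2)\,ds$.

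For the $L^2$-estimate on $\lambda_b$ I would apply Fubini: the exponential factor is bounded by a constant depending only on $\mathrm{diam}(\Omega)$ and $\gamma$, so
$$\int_\Omega \lambda_b^2\,dx \leq C\int_{x_{2*}}^{x_2^*}(\overline{x_1}-\underline{x_1})(x_2)\,\bigl(\underline{x_1}'(x_2)\bigr)^2\,H^2(x_2)\,dx_2 = C\int_{x_{2*}}^{x_2^*}\beta(x_2)\,|\underline{x_1}'(x_2)|\,H^2(x_2)\,dx_2,$$
which is finite by hypothesis (\ref{lem_wx2_1}). For $\lambda_v$, the Cauchy--Schwarz inequality applied in the $s$-variable gives $|\lambda_v(x_1,x_2)|^2 \leq C\,\mathrm{diam}(\Omega)\,\int_{\underline{x_1}(x_2)}^{\overline{x_1}(x_2)}\alpha^2(s,x_2)\,ds$, and integration over $\Omega$ together with $\alpha = e^{\gamma x_1}H_{x_2}\in L^2(\Omega)$ (by Lemma \ref{lem_divu_w}) yields $\|\lambda_v\|_{L^2(\Omega)}\leq C(DATA)$.

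To see that $\lambda$ coincides with the distributional derivative $w_{x_2}$, I would argue by approximation. Choose $H_n\in C^\infty(\overline{\Omega})$ with $H_n\to H$ in $H^1(\Omega)$ and $H_n|_{\Gamma_{in}}\to H|_{\Gamma_{in}}$ in $H^{1/2}(\Gamma_{in})$, and solve the smooth transport equation $\gamma w_n + \partial_{x_1}w_n = H_n$ with $w_n|_{\Gamma_{in}}=0$ by characteristics. For such classical solutions all the manipulations (\ref{wx2_1})--(\ref{wx2_3}) are rigorous and give $\partial_{x_2}w_n = \lambda_n$ pointwise, where $\lambda_n$ is defined by the formula of the lemma with $H_n$ and $\alpha_n = e^{\gamma x_1}\partial_{x_2}H_n$ in place of $H,\alpha$. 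The estimates of the previous paragraph, applied to the differences, give $\lambda_n\to\lambda$ in $L^2(\Omega)$; on the other hand, uniqueness of solutions to $\gamma w + w_{x_1}=H$ in the class $W$ (a direct consequence of the Poincaré inequality (\ref{Poin}) applied to the homogeneous equation) combined with $L^2$-continuous dependence yields $w_n\to w$ in $L^2(\Omega)$. Testing $\partial_{x_2}w_n=\lambda_n$ against $\phi\in C_c^\infty(\Omega)$ and passing to the limit gives $\int_\Omega w\,\phi_{x_2}\,dx = -\int_\Omega \lambda\,\phi\,dx$, which is the desired identification.

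The main technical obstacle is the approximation step, because the boundary contribution to $\lambda_n-\lambda$ is weighted by $\underline{x_1}'(x_2)$, which blows up as $x_2 \to x_{2*},x_2^*$. To ensure $\lambda_n\to\lambda$ in $L^2(\Omega)$ one must control $\int_{x_{2*}}^{x_2^*}\beta(x_2)|\underline{x_1}'(x_2)|(H-H_n)^2(x_2)\,dx_2$, and plain convergence of the traces in $H^{1/2}(\Gamma_{in})$ is not automatically sufficient in the presence of this weight; one selects the approximants directly in the weighted space $L^2(\Gamma_{in};\beta|\underline{x_1}'|\,d\sigma)$, whose completeness with respect to smooth functions is guaranteed by the finiteness assumption (\ref{lem_wx2_1}). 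Once this is arranged, the rest of the argument is routine.
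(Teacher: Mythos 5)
Your proof is correct and follows the same overall strategy as the paper's: the $L^2$ bound on $\lambda$ comes from splitting off the boundary term (controlled exactly by hypothesis (\ref{lem_wx2_1}) via Fubini) and the volume term (controlled by $\|\alpha\|_{L^2(\Omega)}$ and the diameter), and the identification $\lambda = w_{x_2}$ amounts to justifying the formal passage (\ref{wx2_1})--(\ref{wx2_3}). Where you differ is in how that passage is justified: the paper simply asserts that it can be ``inverted'', i.e.\ that $e^{-\gamma x_1}\partial_{x_1}[e^{\gamma x_1}\lambda]=\partial_{x_2}H$ holds distributionally and, together with the inflow trace, forces $\lambda=w_{x_2}$; you instead regularize $H$, solve the transport equation classically along characteristics, and pass to the limit using uniqueness in $W$. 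Your route is longer but makes explicit the two points the paper leaves implicit: that $w$ is the unique $W$-solution of $\gamma w+w_{x_1}=H$ with zero inflow data (your Poincar\'e/sign argument), and that the trace identity $w_{x_2}|_{\Gamma_{in}}=-(\tau^1/\tau^2)H|_{\Gamma_{in}}$ needs a rigorous meaning. One simplification for your last paragraph: the weighted trace convergence you worry about is needed only for global $L^2$ convergence of $\lambda_n$, not for the distributional identification. Testing against $\phi\in C_c^\infty(\Omega)$ only involves $x_2$ in a compact subinterval of $(x_{2*},x_2^*)$ --- by convexity the slices $x_2=\mathrm{const}$ near $x_{2*}$ and $x_2^*$ collapse onto the singular points, so $\mathrm{supp}\,\phi$ avoids them --- and there $\underline{x_1}'$ is bounded, so ordinary $H^{1/2}(\Gamma_{in})$ convergence of the traces suffices to pass to the limit. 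The hypothesis (\ref{lem_wx2_1}) is then used exactly once, to conclude that the distribution $w_{x_2}$, having been identified with $\lambda$, belongs to $L^2(\Omega)$; this removes the joint-approximation issue entirely.
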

\textbf{Remark.} The functions $H(\underline{x_1}(\cdot))$ and
$\int_{\underline{x_1}(\cdot)}^{x_1} \alpha(s,\cdot) \,ds$
are defined a.e. in $(x_{2*},x_2^*)$, thus $\lambda$ is defined
a.e. in $\Omega$, more precisely, it is defined for all $x_1$ and almost all $x_2 \in (x_{2*},x_2^*)$.

\textit{Proof of lemma \ref{lem_wx2}.}
Since $\alpha \in L^2(\Omega)$, we see that (\ref{lem_wx2_1}) implies $\lambda \in L^2(\Omega)$.
Moreover, inverting the passage from (\ref{wx2_1}) to (\ref{wx2_3}) we conclude that
$$
e^{-\gamma \,x_1} \, \partial_{x_1} [e^{\gamma \,x_1} \lambda] = \partial_{x_2} H,
$$
thus indeed $\lambda = w_{x_2}$.
$\square$

Now we are ready to formulate a regularity result that can be considered a major step in the proof
of theorem \ref{main}.
\begin{proposition}  \label{th_reg}
Let $(u,w) \in V \times W$ be a weak solution to (\ref{system}) and assume that the boundary
constraint (\ref{lem_wx2_1}) holds. Then $(u,w) \in H^2(\Omega) \times H^1(\Omega)$ and
\begin{equation}	\label{reg}
||u||_{H^2(\Omega)} + ||w||_{H^1(\Omega)} \leq C(DATA).
\end{equation}
\end{proposition}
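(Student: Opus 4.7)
The plan is to use the preceding lemmas as a pipeline: first upgrade $w$ to $H^1(\Omega)$ via the transport equation, then use that to upgrade $\mathrm{div}\, u$ to $H^1(\Omega)$, and finally read off $u \in H^2(\Omega)$ from the Helmholtz decomposition.

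First, I would apply Lemma \ref{lem_wx2} under the standing hypothesis (\ref{lem_wx2_1}). This directly yields $w_{x_2} \in L^2(\Omega)$ with $\|w_{x_2}\|_{L^2(\Omega)} \leq C(\mathrm{DATA})$. To control $w_{x_1}$, I would return to the transport equation (\ref{w_wx1}), namely $\tfrac{\gamma}{\nu+2\mu}\, w + w_{x_1} = \tilde H \in H^1(\Omega)$. Since the weak-solution estimate (\ref{ene1}) gives $w \in L^2(\Omega)$ and since $\tilde H \in H^1(\Omega) \hookrightarrow L^2(\Omega)$, solving for $w_{x_1}$ algebraically yields $w_{x_1} \in L^2(\Omega)$ with the right bound. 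Combined with the estimate on $w_{x_2}$, this shows $w \in H^1(\Omega)$ and $\|w\|_{H^1(\Omega)} \leq C(\mathrm{DATA})$.

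Next, I would feed $w \in H^1(\Omega)$ back into Lemma \ref{lem_divu_w}. Rewriting (\ref{divu_w}) as
\begin{equation*}
\mathrm{div}\, u \;=\; \frac{1}{2\mu+\nu}\bigl(\gamma w - H\bigr),
\end{equation*}
we see that the right-hand side lies in $H^1(\Omega)$, hence $\mathrm{div}\, u \in H^1(\Omega)$ with $\|\mathrm{div}\, u\|_{H^1(\Omega)} \leq C(\mathrm{DATA})$. Now I would turn to the Helmholtz decomposition $u = \nabla \psi + \nabla^{\perp} A$. Lemma \ref{lem_A} already provides $A \in H^3(\Omega)$, so it remains to establish $\psi \in H^3(\Omega)$. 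The function $\psi$ solves the Neumann problem $\Delta \psi = \mathrm{div}\, u$ in $\Omega$, $\partial_n \psi|_{\Gamma} = 0$ (derived from $u \cdot n|_{\Gamma} = 0$ and $n \cdot A^{\perp}|_{\Gamma} = 0$). Since $\mathrm{div}\, u \in H^1(\Omega)$ and $\Gamma$ is of class $H^{5/2}$, standard elliptic regularity for the Neumann problem on a domain with $H^{5/2}$ boundary produces $\psi \in H^3(\Omega)$ with $\|\psi\|_{H^3(\Omega)} \leq C(\Omega)\|\mathrm{div}\, u\|_{H^1(\Omega)} \leq C(\mathrm{DATA})$. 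Consequently $u = \nabla \psi + \nabla^{\perp} A \in H^2(\Omega)$ with $\|u\|_{H^2(\Omega)} \leq C(\mathrm{DATA})$, which combined with the $H^1$ bound for $w$ gives (\ref{reg}).

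The main obstacle is actually upstream of this proof and has already been dealt with by Lemma \ref{lem_wx2}: the geometric constraint (\ref{lem_wx2_1}) is precisely what makes the boundary integral involving $\underline{x_1}'(x_2)\, H^2$ finite near the singular points $x_*$ and $x^*$. Inside this proof itself the only subtlety is that the Neumann problem for $\psi$ is only determined up to a constant; this causes no difficulty because only the gradient $\nabla \psi$ enters the decomposition, and elliptic regularity on an $H^{5/2}$ domain still yields the $H^3$ bound after quotienting out constants. Everything else is bookkeeping of constants that all track to $C(\mathrm{DATA})$ through (\ref{ene1}), Lemma \ref{lem_rot}, Lemma \ref{lem_divu_w} and Lemma \ref{lem_wx2}.
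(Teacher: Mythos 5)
Your proposal is correct and follows essentially the same route as the paper: Lemma \ref{lem_wx2} together with the transport equation (\ref{w_wx1}) gives $w \in H^1(\Omega)$, then (\ref{divu_w}) gives $\mathrm{div}\,u \in H^1(\Omega)$, and the Neumann problem (\ref{psi}) plus Lemma \ref{lem_A} gives $u \in H^2(\Omega)$. Your added remarks on recovering $w_{x_1}$ algebraically and on the constant ambiguity in the Neumann problem are correct fillings-in of details the paper leaves implicit.
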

\begin{proof}
At this stage in order to complete the proof it is enough to resume
the steps we have made. From (\ref{w_wx1}) and lemma \ref{lem_wx2} we have $w \in H^1(\Omega)$.
Thus from (\ref{divu_w}) we conclude that $div \, u \in H^1(\Omega)$, and so
(\ref{psi}) yields $\psi \in H^3(\Omega)$, where $u = \nabla \psi + \nabla^{\perp} A$.
From lemma $\ref{lem_A}$ we have $A \in H^3(\Omega)$, hence we conclude that $u \in H^2(\Omega)$
and the estimate (\ref{reg}) holds.
\end{proof}
As we see, the condition (\ref{lem_wx2_1}) is crucial for our regularization method to work,
but it is hard to interprete it since it doesn't depend only on the geometry of the boundary,
but also on the function $H$. Thus we want to formulate some conditions equivalent, or at least sufficient
for (\ref{lem_wx2_1}) to be satisfied, that would depend only on the geometry of $\Gamma$.
Such condition is stated in the following
\begin{lem} \label{lem_war1}
Assume that for some $\epsilon>0$
\begin{equation}  \label{war1}
\int_{x_{2*}}^{x_2^*} \beta^{1+\epsilon}(x_2) |x_1'(x_2)| \,dx_2 < \infty,
\end{equation}
then (\ref{lem_wx2_1}) holds.
\end{lem}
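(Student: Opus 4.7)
The plan is to reduce the condition (\ref{lem_wx2_1}) to the geometric hypothesis (\ref{war1}) by a H\"older split that separates the weight $\beta \, |\underline{x_1}'|$ from the factor $H^2$, and then to control the remaining $H$-integral by the trace of $H$ on $\Gamma_{in}$. The key idea is that (\ref{war1}) gives us a little room ($\epsilon > 0$) over the quantity we actually need to bound, and that room is exactly what we can spend on turning $H^2$ into a higher power of $H$.

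Concretely, I would choose conjugate exponents $p = 1+\epsilon$ and $q = (1+\epsilon)/\epsilon$ and factorize the integrand as
$$\beta \, |\underline{x_1}'| \, H^2 = \bigl(\beta^{1+\epsilon} \, |\underline{x_1}'|\bigr)^{1/p} \cdot \bigl(|\underline{x_1}'| \, H^{2q}\bigr)^{1/q}.$$
The exponents are picked so that the first factor matches (\ref{war1}) verbatim, the total power of $|\underline{x_1}'|$ reassembles to $1$, and the total power of $\beta$ reassembles to $1$. H\"older's inequality then yields
$$\int_{x_{2*}}^{x_2^*} \beta \, |\underline{x_1}'| \, H^2 \, dx_2 \leq \Bigl( \int_{x_{2*}}^{x_2^*} \beta^{1+\epsilon} \, |\underline{x_1}'| \, dx_2 \Bigr)^{1/p} \Bigl( \int_{x_{2*}}^{x_2^*} |\underline{x_1}'| \, H^{2q}(x_2) \, dx_2 \Bigr)^{1/q},$$
and the first factor on the right is finite by the assumption (\ref{war1}).

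For the second factor I would parametrize $\Gamma_{in}$ by $x_2 \mapsto (\underline{x_1}(x_2),x_2)$, so that the arclength element satisfies $ds = \sqrt{1 + |\underline{x_1}'(x_2)|^2} \, dx_2 \geq |\underline{x_1}'(x_2)| \, dx_2$. This gives the pointwise bound
$$\int_{x_{2*}}^{x_2^*} |\underline{x_1}'(x_2)| \, H^{2q}(x_2) \, dx_2 \leq \int_{\Gamma_{in}} H^{2q} \, ds.$$
Since $H \in H^1(\Omega)$ and $\Gamma$ is of class $H^{5/2}$, the trace theorem yields $H|_{\Gamma_{in}} \in H^{1/2}(\Gamma_{in})$, and the one-dimensional Sobolev embedding $H^{1/2}(\Gamma_{in}) \hookrightarrow L^{2q}(\Gamma_{in})$ (valid for every finite exponent) makes this last integral finite. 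Combining the two bounds yields (\ref{lem_wx2_1}).

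The only delicate step is the exponent bookkeeping in the H\"older split: one must ensure simultaneously that $\beta$ appears with power $1+\epsilon$ on the right (to fit (\ref{war1})), that $|\underline{x_1}'|$ appears with total power $1$ in both integrals (so the second can be dominated by the arclength integral on $\Gamma_{in}$), and that the resulting power $2q$ on $H$ is still finite (which is automatic since $\epsilon > 0$). Once this is arranged, the rest is a standard application of the trace theorem and the 1D Sobolev embedding, so I do not anticipate a serious obstacle.
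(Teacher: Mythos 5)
Your proof is correct and follows essentially the same route as the paper: a H\"older split with exponents $1+\epsilon$ and $(1+\epsilon)/\epsilon$ isolating the factor in (\ref{war1}), conversion between the measure $|\underline{x_1}'(x_2)|\,dx_2$ and arclength on $\Gamma_{in}$, and the embedding $H^{1/2}(\Gamma_{in}) \hookrightarrow L^p(\Gamma_{in})$ for every finite $p$ applied to the trace of $H$. The only cosmetic difference is the order of operations (the paper first rewrites the whole integral as $\int_{\Gamma_{in}} \beta H^2\,d\sigma$ and then applies H\"older on the boundary, after localizing near the singularity points), and your exponent bookkeeping checks out.
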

\begin{proof}
Since the integrability in (\ref{lem_wx2_1}) is questionable only in the neighbourhood
of $x_{2*}$ and $x_2^*$, we can fix some small $\delta>0$ and focus on
$$
\big( \int_{x_{2*}}^{x_{2*}+\delta} + \int_{x_2^*-\delta}^{x_2^*} \big) \beta(x_2) |\underline{x_1}'(x_2)| \, H^2(x_2) \,dx_2.
$$
We will consider the first integral, the second is dealt with in the same way. Observe that
on $\Gamma_{in}$ we have
\mbox{$dx_2 = |\tau_2| \, d\sigma,$}
thus
$$
\int_{x_{2*}}^{x_{2*}+\delta} \beta(x_2) |\underline{x_1}'(x_2)| \, H^2(x_2) \,dx_2
= \int_{x_{2*}}^{x_{2*}+\delta} \beta(x_2) \, |\frac{\tau^1}{\tau^2}| \, H^2(x_2) \,dx_2
\simeq  \int_{\Gamma_{in}^1} \beta \, \, H^2 \,d\sigma,
$$
where $\Gamma_{in}^1$ denotes the part of $\Gamma_{in}$ between $x_{2*}$ and $x_{2*}+\delta$.
In the last passage we used the fact that $\tau_1 \simeq 1$ in the neighbourhood of the singularity points.
Since $H \in H^{1/2}(\Gamma_{in})$, due to the Sobolev imbedding theorem we have
$H \in L^p(\Gamma_{in}) \quad \forall \, p< + \infty$, and thus
\begin{equation}  \label{imp1}
\Big[ \exists \epsilon>0: \quad \int_{\Gamma_{in}^1} \beta^{1+\epsilon} \,d\sigma < \infty \Big]
\Rightarrow \int_{\Gamma_{in}^1} \beta \, H^2 \,d\sigma < +\infty,
\end{equation}
but on $\Gamma_{in}^1$ we have $d\sigma \sim |\underline{x_1}'(x_2)| \, dx_2$ and the l.h.s of (\ref{imp1})
is equivalent to
$$
\exists \epsilon>0: \quad \int_{x_{2*}}^{x_{2*}+\delta} \beta^{1+\epsilon}(x_2) |\underline{x_1}'(x_2)| \,dx_2 < +\infty \quad
$$
\end{proof}
The condition (\ref{war1}) depends only on the geometry of $\Gamma$ in the neighbourhood of
the singularity points.
Now we want to determine some classes of domains where the condition (\ref{war1}) holds and doesn't hold.
We will focus on one of the singularity points, let's say $x_{2*}$
and assume without loss of generality that $(x_{1*},x_{2*})=(0,0)$.
For simplicity let us denote $x_2^l(x_1)=:l(x_1)$.

To start with, consider a class of domains where $l(x_1)=|x_1|^q, \quad q \geq 2$.
We have to assume $q \geq 2$ to assure that $|x|^q \in H^{1/2}(\mathbf{R})$.
Indeed, we have
\begin{equation}    \label{war_reg}
|x|^q \in H^{1/2}(\mathbf{R}) \iff \int_0^1 \int_0^1 \frac{|(x+h)^r - x^r|^2}{h^2} \,dh \,dx < +\infty,
\end{equation}
where $r=q-2$. Dividing the integral over $x$ into $\int_0^h + \int_h^1$ we can see that r.h.s of
(\ref{war_reg}) is equivalent to integrability on $(0,1)$ of a function $x^{2r-1}$, what holds for $r>0$.

We have $\underline{x_1}(x_2)=-x_2^{1/q}$ and $\overline{x_1}(x_2)=x_2^{1/q}$, and thus
$$
\beta^{1+\epsilon}(x_2) |\underline{x_1}'(x_2)| =
[\overline{x_1}(x_2) - \underline{x_1}(x_2)]^{1+\epsilon} \, | \underline{x_1}'(x_2)|^{2+\epsilon}
\sim x_2^{ \frac{3+2\epsilon}{q}- (2+\epsilon) }
$$
We see that (\ref{war1}) holds for
\begin{equation}  \label{war_q}
q < \frac{3+2\epsilon}{1 + \epsilon} < 3
\end{equation}
In particular, (\ref{war1}) doesn't hold for any $\epsilon>0$ (or even for $\epsilon=0$) if $q=3$,
but for any $q<3$ there exists $\epsilon$ such that (\ref{war1}) is satisfied.
Although this example concerns only a particular class of boundaries,
it suggests that we should be able to determine whether (\ref{war1}) holds or does not hold
by comparing the function $l(x_1)$
with the limit case from our example, i.e. $l^*(x_1)=|x_1|^3$. Let us denote
\begin{equation} \label{g}
g_q = \lim_{x_1 \to 0} \frac{l(x_1)}{|x_1|^q}.
\end{equation}
%
It turns out that whether (\ref{war1}) holds depends on $g_q$ in the following way:
\begin{lem} \label{lem_g}
Let $g_q$ be defined in (\ref{g}). Then we have
\begin{eqnarray}
\textrm{(a)} \quad \exists q<3: \quad g_q = +\infty \quad \Rightarrow \quad \textrm{(\ref{war1}) holds for some} \; \epsilon>0; \nonumber\\
\textrm{(b)} \quad g_3 < +\infty \quad \Rightarrow \quad \textrm{(\ref{war1}) does not hold for any} \; \epsilon \geq 0.
\end{eqnarray}
\end{lem}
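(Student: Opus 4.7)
I would fix the normalization $(x_{1*},x_{2*}) = (0,0)$ and work with the boundary function $l(x_1) = x_2^l(x_1)$ near zero; the convexity of $\Omega$ forces $l$ to be convex with $l(0) = l'(0) = 0$. Let $l_{\mathrm{in}}(t) := l(-t)$ and $l_{\mathrm{out}}(t) := l(t)$ for $t > 0$; each is convex, increasing from $0$, and has vanishing derivative at $0$. The hypotheses on $g_q$ translate directly into pointwise size comparisons: $g_q = +\infty$ means $l_{\mathrm{in}}(t), l_{\mathrm{out}}(t) \ge M t^q$ on some $(0,\delta_M)$ for arbitrary $M > 0$, while $g_3 < +\infty$ gives $l_{\mathrm{in}}(t) \le K t^3$ on some $(0,\delta)$. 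In (\ref{war1}) it is enough to treat the endpoint $x_{2*} = 0$; the endpoint $x_2^*$ is handled by a symmetric argument.

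For (a) I would set $\phi(x_2) = -\underline{x_1}(x_2) = l_{\mathrm{in}}^{-1}(x_2)$. Since $l_{\mathrm{in}}$ is convex and increasing from zero, $\phi$ is concave and increasing from zero, so $\phi' > 0$ is monotonically decreasing. Inverting the lower bound $l_{\mathrm{in}}(t) \ge M t^q$ gives $\phi(x_2) \le (x_2/M)^{1/q}$, and monotonicity of $\phi'$ yields a Hardy-type pointwise bound
\[
\phi(x_2) = \int_0^{x_2} \phi'(y)\,dy \ge x_2\,\phi'(x_2),
\]
hence $|\underline{x_1}'(x_2)| = \phi'(x_2) \le \phi(x_2)/x_2 \le C\,x_2^{1/q - 1}$. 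The identical argument applied to $l_{\mathrm{out}}$ bounds $\overline{x_1}(x_2)$ and $\overline{x_1}'(x_2)$. Plugging these into
$\beta^{1+\epsilon}(x_2)|\underline{x_1}'(x_2)| = (\overline{x_1}-\underline{x_1})^{1+\epsilon}|\underline{x_1}'|^{2+\epsilon}$
reproduces the exponent already computed in the model example $l(x_1) = |x_1|^q$: the integrand is bounded by $C\,x_2^{(3+2\epsilon)/q - (2+\epsilon)}$, which is integrable near zero iff $q < (3+2\epsilon)/(1+\epsilon) = 2 + 1/(1+\epsilon)$. Since this upper threshold tends to $3$ as $\epsilon \to 0^+$, any $q < 3$ admits an admissible $\epsilon > 0$.

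For (b) I would exploit convexity the other way. Since $l_{\mathrm{in}}'$ is nondecreasing with $l_{\mathrm{in}}'(0) = 0$, for $t<\delta/2$,
\[
t\,l_{\mathrm{in}}'(t) \le \int_t^{2t} l_{\mathrm{in}}'(s)\,ds = l_{\mathrm{in}}(2t) - l_{\mathrm{in}}(t) \le K(2t)^3,
\]
which upgrades the upper bound $l_{\mathrm{in}}(t) \le K t^3$ to $l_{\mathrm{in}}'(t) \le 8K t^2$. Changing variables $x_2 = l_{\mathrm{in}}(t)$ in (\ref{war1}), exactly as in the example following (\ref{war_reg}), recasts the integrand as $(\overline{x_1}(l_{\mathrm{in}}(t)) + t)^{1+\epsilon}/l_{\mathrm{in}}'(t)^{1+\epsilon}$. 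Dropping the nonnegative term $\overline{x_1}(l_{\mathrm{in}}(t))$ and inserting $l_{\mathrm{in}}'(t) \le 8K t^2$ gives
\[
\int_0^{\delta'} \frac{(\overline{x_1}(l_{\mathrm{in}}(t)) + t)^{1+\epsilon}}{l_{\mathrm{in}}'(t)^{1+\epsilon}}\,dt \ge C\int_0^{\delta'} \frac{t^{1+\epsilon}}{t^{2(1+\epsilon)}}\,dt = C\int_0^{\delta'} t^{-(1+\epsilon)}\,dt = +\infty
\]
for every $\epsilon \ge 0$, so (\ref{war1}) fails.

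The main obstacle is in (a): the hypothesis $l_{\mathrm{in}}(t) \ge M t^q$ carries no direct pointwise information about $l_{\mathrm{in}}'$, and hence none about $\phi'$, so without further structure the integrand of (\ref{war1}) cannot be bounded pointwise at the singular endpoint. The essential role of convexity of $\Omega$ is precisely to transfer this integral-type information into a pointwise bound via monotonicity of $\phi'$, and it is this step that produces the sharp scaling and allows the threshold to be pushed all the way up to $q = 3$.
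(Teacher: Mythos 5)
Your proposal is correct, and it reaches the conclusion by a genuinely different route from the paper. The paper argues through l'H\^opital-type identities between limits: it writes $|g_3|=\lim|\partial_{x_1}l/\partial_{x_1}|x_1|^3|$, inverts to get $\lim|\underline{x_1}'(x_2)|=|g_3|^{-1}\lim x_2^{-2/3}$ and $\lim[\overline{x_1}-\underline{x_1}]=2|g_3|^{-1}\lim x_2^{1/3}$, multiplies these limits to conclude the integrand behaves like $|g_3|^{-(3+2\epsilon)}x_2^{-1-\epsilon/3}$, and then dismisses part (a) with ``can be shown exactly in the same way.'' That computation tacitly assumes the relevant limits of derivative ratios exist (the converse direction of l'H\^opital is not automatic) and leaves (a) essentially unproved. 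You replace all of this by honest pointwise inequalities extracted from convexity: for (a), concavity of $\phi=l_{\mathrm{in}}^{-1}$ gives the Hardy-type bound $\phi'(x_2)\le\phi(x_2)/x_2\le Cx_2^{1/q-1}$, which converts the purely zeroth-order hypothesis $g_q=+\infty$ into the derivative bound needed in the integrand; for (b), monotonicity of $l_{\mathrm{in}}'$ plus the doubling trick $t\,l_{\mathrm{in}}'(t)\le l_{\mathrm{in}}(2t)$ upgrades $l_{\mathrm{in}}(t)\le Kt^3$ to $l_{\mathrm{in}}'(t)\le 8Kt^2$, and the change of variables $x_2=l_{\mathrm{in}}(t)$ then yields the divergent minorant $t^{-(1+\epsilon)}$. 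The exponents you obtain agree with the paper's (your $t^{-(1+\epsilon)}\,dt$ corresponds to its $x_2^{-1-\epsilon/3}\,dx_2$ under $x_2\sim t^3$), so the two arguments are quantitatively consistent; what your version buys is rigor (no assumed limits of derivatives) and a complete proof of part (a), at the modest cost of invoking convexity of $\Omega$ explicitly — which is anyway a standing hypothesis of the paper. The only points worth making explicit are that $l'(0)=0$ (since $n_1=0$ at the singularity point) and that $l_{\mathrm{in}}'>0$ for $t>0$ (otherwise $\Gamma_*$ would contain an arc, contradicting the assumption that it consists of two points), both of which your argument uses implicitly.
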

\begin{proof}
Let us show (b). We have
\begin{equation} \label{lem_g_1}
|g_3| = \lim_{x_1 \to 0} |\frac{\partial_{x_1}l(x_1)}{\partial_{x_1} |x_1|^3}| =
\lim_{x_2 \to 0^+} | \frac {\partial_{x_2}(x_2^{1/3})} {\partial_{x_2} \underline{x_1}(x_2)} | =
\lim_{x_2 \to 0^{+}} |\frac{\partial_{x_2}(x_2^{1/3})}{\partial_{x_2} \overline{x_1}(x_2)}|,
\end{equation}
thus (we understand that $\frac{1}{0} = \infty$ and $\frac{1}{\infty}=0$):
\begin{equation} \label{lem_g_2}
\lim_{x_2 \to 0^{+}} |\underline{x_1}'(x_2)| = \frac{1}{|g_3|} \, \lim_{x_2 \to 0^{+}} |x_2^{-2/3}|.
\end{equation}
From (\ref{lem_g_1}) we get
\begin{equation}
|g_3| = \lim_{x_2 \to 0^{+}} |\frac{x_2^{1/3}}{\underline{x_1}(x_2)}|
= \lim_{x_2 \to 0^{+}} |\frac{x_2^{1/3}}{\overline{x_1}(x_2)}|,
\end{equation}
thus
\begin{equation} \label{lem_g_4}
\lim_{x_2 \to 0^{+}} [\overline{x_1}(x_2) - \underline{x_1}(x_2)] =
\frac{2}{|g_3|} \, \lim_{x_2 \to 0^{+}} \, x_2^{1/3}.
\end{equation}
Combining (\ref{lem_g_2}) and (\ref{lem_g_4}) we get
$$
\lim_{x_2 \to 0^{+}} [\overline{x_1}(x_2) - \underline{x_1}(x_2)]^{1+\epsilon} |\underline{x_1}'(x_2)|^{2+\epsilon} =
$$$$
=\frac{1}{|g_3|^{3+2\epsilon}} \lim_{x_2 \to 0^{+}} |x_2^{1/3}|^{1+\epsilon} \, |x_2^{-2/3}|^{2+\epsilon} =
\frac{1}{|g_3|^{3+2\epsilon}} \lim_{x_2 \to 0^{+}} |x_2|^{-1-\frac{\epsilon}{3}}.
$$
what implies $\int_0^\delta \, [\overline{x_1}(x_2) - \underline{x_1}(x_2)]^{1+\epsilon} |\underline{x_1}'(x_2)|^{2+\epsilon} \,dx_2 = +\infty$
since $\frac{1}{|g_3|^{3+2\epsilon}}>0$. Thus (b) is proved.
(a) can be shown exactly in the same way by comparing $l(x_1)$ with the function $|x_1|^q$.
\end{proof}
A remaining question is what happens in the limit case when
$g_q=0 \quad \forall \, 1<q<3$ but $g_3=+\infty$. The following lemma gives the answer:
\begin{lem}   \label{lem_g1}
\begin{eqnarray}
\left. \begin{array}{c}
\forall \; 1<q<3 \quad g_q = 0 \\
g_3 = +\infty
\end{array} \right\}
\Rightarrow \textrm{(\ref{war1}) does not hold for any} \; \epsilon>0
\end{eqnarray}
\end{lem}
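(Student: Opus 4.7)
The plan is to imitate the proof of part (b) of Lemma \ref{lem_g}, with the reference exponent $3$ replaced by an arbitrary $q \in (1,3)$ and the role of the finite positive constant $1/|g_3|$ taken by the divergent factor $1/|g_q| = +\infty$ (using the conventions $1/0 = \infty$, $1/\infty = 0$ already adopted in that proof). The hypothesis $g_3 = +\infty$ plays no direct role in the divergence estimate itself; it only ensures that the borderline case we are treating is not already covered by Lemma \ref{lem_g}(b).

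First I would repeat the chain of identities (\ref{lem_g_1})--(\ref{lem_g_4}) with $3$ replaced by an arbitrary fixed $q \in (1,3)$, together with the trivial lower bound $\overline{x_1}(x_2) - \underline{x_1}(x_2) \geq |\underline{x_1}(x_2)|$. This yields
\begin{equation*}
\lim_{x_2 \to 0^+}\frac{[\overline{x_1}(x_2) - \underline{x_1}(x_2)]^{1+\epsilon}\,|\underline{x_1}'(x_2)|^{2+\epsilon}}{x_2^{(3+2\epsilon)/q - (2+\epsilon)}}
\;=\;\frac{2^{1+\epsilon}}{q^{2+\epsilon}\,|g_q|^{3+2\epsilon}}\;=\;+\infty,
\end{equation*}
since $g_q = 0$ by hypothesis. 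Consequently, for any prescribed $K>0$ and every sufficiently small $x_2>0$,
\begin{equation*}
[\overline{x_1}(x_2) - \underline{x_1}(x_2)]^{1+\epsilon}\,|\underline{x_1}'(x_2)|^{2+\epsilon}\;\geq\;K\,x_2^{(3+2\epsilon)/q - (2+\epsilon)}.
\end{equation*}

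Second, I would fix $\epsilon>0$ (the quantifier in (\ref{war1})) and then choose $q$. The critical exponent $(3+2\epsilon)/q - (2+\epsilon)$ equals $-1$ exactly at $q=(3+2\epsilon)/(1+\epsilon)$, which is strictly less than $3$ for every $\epsilon>0$. Thus the interval $\bigl[(3+2\epsilon)/(1+\epsilon),\,3\bigr)$ is nonempty, and any $q$ chosen from it makes the exponent $\le -1$, so the function $x_2^{(3+2\epsilon)/q - (2+\epsilon)}$ is not integrable on any right-neighbourhood of $0$. Combined with the pointwise domination above this forces the integral of $\beta^{1+\epsilon}|\underline{x_1}'|$ over $(x_{2*},x_{2*}+\delta)$ to be infinite. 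The symmetric argument near $x_2^*$ then shows that the full integral in (\ref{war1}) diverges, so (\ref{war1}) fails for every $\epsilon>0$.

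The main obstacle is the first step: carrying out the L'Hôpital-type manipulations of (\ref{lem_g_1})--(\ref{lem_g_4}) in the limiting regime $g_q = 0$, where the limiting ratios of functions and of their derivatives are infinite rather than finite. The proof of Lemma \ref{lem_g}(b) already proceeds in exactly this informal style, and the rigour behind it rests on the $H^{5/2}$-regularity of $\Gamma$, which guarantees the monotonicity and differentiability needed to freely manipulate the inverse functions $\underline{x_1},\overline{x_1}$ near the singularity point; I would invoke the same framework without reproving it.
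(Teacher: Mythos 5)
Your proposal is correct and reaches the conclusion by a genuinely different route than the paper. The paper fixes $\epsilon$, sets $h(x_2)=x_2^{1/3}/|\underline{x_1}(x_2)|$, translates the two hypotheses into ``$h\to\infty$ but slower than any power of $x_2^{-1}$'', and then factors the integrand as $A_\epsilon(x_2)\,B_\epsilon(x_2)$ with $A_\epsilon=x_2^{-1-\epsilon/3}h^{-5-3\epsilon}$ non-integrable and $B_\epsilon=[h-x_2h']^{2+\epsilon}\to\infty$; the hypothesis $g_3=+\infty$ is what drives $B_\epsilon\to\infty$ there. You instead fix $\epsilon$, observe that the critical exponent $(3+2\epsilon)/(1+\epsilon)$ from the power-law computation (\ref{war_q}) is strictly below $3$, pick a single $q=q(\epsilon)$ in $[(3+2\epsilon)/(1+\epsilon),3)$, and rerun the comparison of Lemma \ref{lem_g}(b) against $|x_1|^q$ with the divergent constant $1/|g_q|=+\infty$. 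This is shorter, reuses computations already in the paper, and makes transparent that for each $\epsilon$ only the single hypothesis $g_{q(\epsilon)}=0$ is needed for the divergence ($g_3=+\infty$ merely places the case outside Lemma \ref{lem_g}(b)), which is a small clarification the paper's presentation does not offer.

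One caveat, which you partly anticipate: the step that converts $g_q=0$ (an asymptotic statement about $\underline{x_1}$) into an asymptotic lower bound on the product involving $|\underline{x_1}'|$ is a reverse-L'H\^opital inference, and $H^{5/2}$-regularity alone does not justify it — a $C^{1,\alpha}$ graph with $l(x_1)=o(|x_1|^q)$ need not satisfy $l'(x_1)=o(|x_1|^{q-1})$ pointwise. What rescues it here is the convexity of $\Omega$: the graph $l=x_2^l$ is convex with $l(0)=l'(0)=0$, so $l(x)/x\le l'(x)\le l(2x)/x$, and inserting these two-sided bounds (together with $\overline{x_1}-\underline{x_1}\ge|\underline{x_1}|$ and $l(y)=o(y^{q'})$ for $q'$ slightly larger than $q$) shows that the ratio in your first display indeed tends to $+\infty$, after which your choice of $q$ closes the argument. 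Since the paper's own proofs of Lemma \ref{lem_g}(b) and of this lemma perform the analogous manipulations at the same informal level, your proposal is at parity with the paper in rigor; if you want it airtight, cite convexity rather than $H^{5/2}$-regularity as the justification.
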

\begin{proof}
First of all, observe that
$$
|g_q| = \lim_{x_2 \to 0^+} \frac{|x_2|^{1/q}}{|\underline{x_1}(x_2)|} = \lim_{x_2 \to 0^+} \frac{|x_2|^{1/q}}{|\overline{x_1}(x_2)|}.
$$
For a given function $\underline{x_1}(x_2)$ let us define a function $h$ as
\mbox{$|\underline{x_1}(x_2)| = \frac{x_2^{1/3}}{h(x_2)}$}.
Then we have
\begin{equation}	\label{war_h1}
+\infty = \lim_{x_2 \to 0^+} \frac{x_2^{1/3}}{|\underline{x_1}(x_2)|} = \lim_{x_2 \to 0^+} \, h(x_2)
\end{equation}
and
\begin{equation}   \label{war_h2}
\forall \; 1<q<3: \quad 0 = \lim_{x_2 \to 0^+} \frac{x_2^{1/q}}{|\underline{x_1}(x_2)|}
= \lim_{x_2 \to 0^+} \, x_2^{\frac{3-q}{3q}} \, h(x_2).
\end{equation}
We have
$$
|\underline{x_1}'(x_2)| \sim | \frac{x_2^{-2/3} \, h(x_2) - h'(x_2) \,x_2^{1/3}}{h^2(x_2)}
= | \frac{x_2^{-2/3} \, [h(x_2)-x_2 \, h'(x_2)]}{h^2(x_2)} |,
$$
thus for $\epsilon>0$:
\begin{eqnarray*}
[\overline{x_1}(x_2) - \underline{x_1}(x_2)]^{1+\epsilon} |\underline{x_1}'(x_2)|^{2+\epsilon}
\sim \frac{x_2^{\frac{1+\epsilon}{3}}}{h^{1+\epsilon}(x_2)} \, \frac{x_2^{\frac{-4-2\epsilon}{3}}[h(x_2)-x_2\,h'(x_2)]^{2+\epsilon}}{h^{4+2\epsilon}(x_2)} = \\
= \underbrace{\frac{x_2^{-1-\frac{\epsilon}{3}}}{h^{5+3\epsilon}(x_2)}}_{A_{\epsilon}(x_2)} \, \underbrace{[h(x_2)-x_2\,h'(x_2)]^{2+\epsilon}}_{B_{\epsilon}(x_2)}
\end{eqnarray*}
In order to determine whether this function is integrable when $x_2 \to 0$, observe first that
$A_{\epsilon}(\cdot)$ is not integrable. Indeed, (\ref{war_h2}) implies
\mbox{$x_2^r \, h^{5+3\epsilon}(x_2) \to 0 \quad \forall \; r>0$},
thus for $x_2$ small enough
$$
\frac{x_2^{-1-\frac{\epsilon}{3}}}{h^{5+3\epsilon}} > \frac{x_2^{-1-\frac{\epsilon}{3}}}{x_2^{-r}} = x_2^{-1-\frac{\epsilon}{3}+r},
$$
and if we choose $r<\frac{\epsilon}{3}$ the last function is not integrable, and thus $A_{\epsilon}(\cdot)$ is not integrable.
Now let us see what happens with $B_{\epsilon}(\cdot)$. We have
$$
\lim_{x_2 \to 0^+} x_2 \, h'(x_2) = \lim_{x_2 \to 0^+} \frac{h'(x_2)}{(\ln(x_2))'}
= \lim_{x_2 \to 0^+} \frac{h(x_2)}{\ln(x_2)},
$$
thus $h(x_2)$ is dominating in $B(x_2)$ when $x_2 \to 0$, and in particular
$\lim_{x_2 \to 0^+} B_{\epsilon}(x_2) = +\infty$. We conclude that
$$
\forall \; \epsilon>0: \quad \int_{0}^\delta A_{\epsilon}(x_2) \, B_{\epsilon}(x_2) \,dx_2 = +\infty
$$
what completes the proof.
\end{proof}

Lemma $\ref{lem_g1}$ together with point (b) from lemma $\ref{lem_g}$ shows that if \\
$g_0=0 \quad \forall \, 1<q<3$ then $\beta \notin L^{1+\epsilon}(\Gamma_{in})$ for any $\epsilon>0$,
thus we can not show (\ref{lem_wx2_1}) without additional information on the function $H$; the only information
that we have under the assumptions of theorem \ref{main} is that $H \in H^{1/2}(\Gamma_{in})$.

The condition from the point (a) of lemma \ref{lem_g} means that the singularity in $\underline{x_1}'(x_2)$
in the neighbourhood of the singularity points cannot be too strong, more precisely, it must be weaker
than the singularity of $\partial_{x_2}(x_2^{1/q})$ for some $q<3$. In other words, the boundary around the
singularity points cannot be too flat, it must be "less flat" that a graph of a function $|x_1|^q$
for some $q<3$
(after an obvious translation). Examples of domains that allows or does not allow the application
of our method are shown in Fig. \ref{rys4}.
\begin{figure}
\begin{center}
\includegraphics{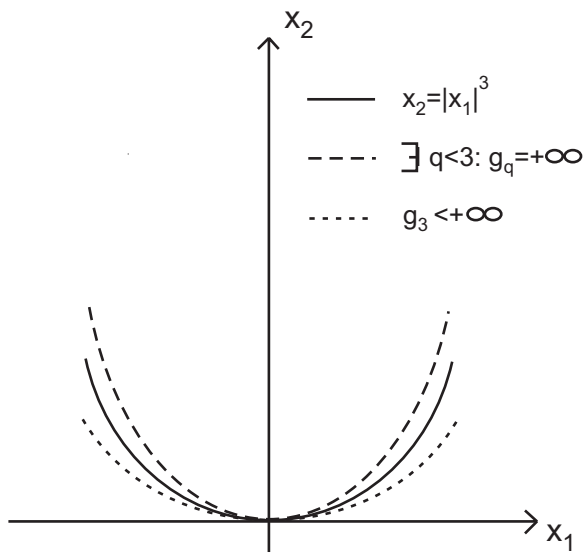}
\caption{Behaviour of the boundary near $(x_{1*},x_{2*})$}       \label{rys4}
\end{center}
\end{figure}
The proof of our main result is almost complete.

\textit{Proof of theorem \ref{main}.} If $\exists \, 1<q<3: \; g_q=+\infty$, then from lemma \ref{lem_g}, (a)
we see that (\ref{war1}) is satisfied, thus lemma (\ref{lem_war1}) gives (\ref{lem_wx2_1}),
and so proposition \ref{th_reg} yields that the weak solution
\mbox{$(u,w) \in H^2(\Omega) \times H^1(\Omega)$}. We want to show that $(u,w)$ satisfies (\ref{system})
almost everywhere.

Clearly (\ref{weak2}) implies that (\ref{system})$_2$ is satisfied a.e.
Taking a test function $v \in V \times H^1_0(\Omega)$ we see that also (\ref{system})$_1$ holds.
The definition of spaces $V$ and $W$ implies that boundary conditions
(\ref{system})$_4$ and (\ref{system})$_5$ hold, thus it is enough to show that also (\ref{system})$_3$
is satisfied. Since $u \in H^2(\Omega)$, we can integrate by parts the r.h.s of (\ref{weak1}) and obtain
$\forall v \in V$:
$$
\underbrace{\int_{\Omega} \big[ F - \big( \partial_{x_1} \,u - \mu \,\Delta u - (\nu+\mu) \nabla \, div \,u\big) \big] \cdot v \,dx}_{=0}
= \int_{\Gamma} [n \cdot 2\mu \mathbf{D}(u) \cdot \tau + f(u \cdot \tau) ] \, (v \cdot \tau) \,d\sigma,
$$
thus indeed $n \cdot 2\mu \mathbf{D}(u) \cdot \tau + f(u \cdot \tau) \overset{a.e.}{=}0$.

We have shown that for $F \in L^2(\Omega)$ and $G \in H^1(\Omega)$ the system (\ref{system}) has a solution
\mbox{$(u,w) \in H^2(\Omega) \times H^1(\Omega)$}. Now let $u_0 \in H^2$ be and extension of the boundary data (\ref{boundary})
and let $(u,w)$ be a solution to (\ref{system}) with
$F=\tilde F$ and $G=\tilde G$ defined in (\ref{tilde_fg}). Then $(u+u_0,w)$ is a solution to (\ref{system1})
and the estimate (\ref{est_main}) holds.
$\square$

\section{Conclusions}

We have shown existence of a solution $(u,w) \in H^2(\Omega) \times H^1(\Omega)$ to the compressible
Oseen system with slip boundary conditions (\ref{system1}). The method we applied follows the approach
of \cite{PM1}, \cite{PM3} and reduces the problem of regularization of the weak solution to a problem
of solvability of the transport equation (\ref{transport}). We can solve this equation and thus prove that
the density $w \in H^1(\Omega)$ provided that the boundary constraint (\ref{war_geom}) holds.
It should be underlined that this constraint does not result from the system (\ref{system}) itself,
but from the method of regularization that reduces the problem to solvability of (\ref{transport}).
Application of different methods of regularization might enable us to weaken the assumption (\ref{war_geom}).
In particular it would be interesting if we could weaken it in the way that enables domains where
$n_1 = 0$ on a set of positive measure, where clearly (\ref{war_geom}) cannot hold.
A natural continuation of this paper would be to consider the compressible Navier-Stokes system.
A similar approach enables again to reduce the problem of regularization of the weak solution
to solvability of a transport equation, which is however more complicated than $(\ref{transport})$
since it contains a nonlinear term $u \cdot \nabla w$. A possible way to solve this equation
is to apply a method of elliptic regularization.

We also plan to extend the approach presented in this paper to $L^p$ - framework.


\begin{thebibliography}{99}
\bibitem{Ad} R.Adams, J.Fournier, \emph{Sobolev spaces}\/, 2nd ed., Elsevier, Amsterdam, 2003
\bibitem{Ga1} G.P.Galdi, \emph{An Introduction to the mathematical theory of the Navier-Stokes Equations}\/,
				Vol.I, Springer-Verlag, New York, 1994
\bibitem{Gi} D.Gilbarg, N.S.Trudinger, \emph{Elliptic Partial Differential Equations of Second Order}\/,
				2nd ed., Springer-Verlag, Berlin, 1983
\bibitem{Kw} R.B.Kellogg, J.R.Kweon, \emph{Compressible Navier-Stokes equations in a bounded domain
				with inflow boundary condition}\/, SIAM J.Math.Anal. 28,1(1997), 94-108
\bibitem{PM1} P.B.Mucha, \emph{On Navier-Stokes equations with Slip Boundary Conditions
			in an Infinite Pipe}\/, Acta Applicandae Mathematicae 76(2003), 1-15
\bibitem{PM2} P.B.Mucha, M.Pokorny, \emph{On a new approach to the issue of existence and regularity
			for the steady compressible Navier-Stokes equations}\/, Nonlinearity 19(2006), 1747-1768
\bibitem{PM3} P.B.Mucha, R.B.Rautmann, \emph{Convergence of Rothe's scheme for the Navier-Stokes equations
			with slip boundary conditions in 2D domains}\/, ZAMM Z.Angew.Math.Mech., 86,9(2006), 691-701
\bibitem{No} A.Novotny, \emph{Some remarks to the compactness of steady compressible isentropic Navier-Stokes equations
			via the decomposition method}\/, Comment.Math.Univ.Carolinae 37,2(1996), 305-342
\bibitem{Te} R.Temam, \emph{Navier Stokes Equations}\/, North-Holland, Amsterdam, 1977.
\end{thebibliography}
\end{document}